\documentclass{commat}

\usepackage{dsfont}

\DeclareMathAlphabet{\pazocal}{OMS}{zplm}{m}{n}
\newtheorem{assumption}{Assumption}

\title{%
    Upper semicontinuity of the attractor for a nonlinear hyperbolic-parabolic coupled system with fractional Laplacian 
    }

\author{%
    Manoel dos Santos and Renato Lobato
    }

\authorinfo[%
    Manoel Dos Santos]{
    Federal University of Pará, Brazil}{%
    jeremias@ufpa.br
    }

\authorinfo[%
    Renato Lobato]{
    Federal University of Pará, Brazil}{%
    renatolobato@ufpa.br
    }

\abstract{%
    In this paper we establish the existence and uniqueness of global solutions (in time), as well as the existence, regularity and stability (upper semicontinuity) of the attractors for the semigroup generated by the solutions of a two-dimensional nonlinear hyperbolic-parabolic coupled system with fractional Laplacian. In addition, we also obtain the existence of an exponential attractor and show that this attractor has a finite fractal dimension in a space containing the phase space of the dynamical system.
    }

\keywords{%
    Hyperbolic-parabolic coupled system, Global and Exponential Attractors, Upper Semicontinuity of the attractors, Fractional Laplacian.
    }

\msc{%
    35B40, 35B41, 37L30.
    }

\VOLUME{32}
\YEAR{2024}
\NUMBER{1}
\firstpage{281}
\DOI{https://doi.org/10.46298/cm.11749}

\begin{document}

\section{Introduction}\label{sec:1}

In this work, we study the long-time behavior of the solutions of the following two-dimensional nonlinear hyperbolic-parabolic coupled  system with fractional Laplacian
\begin{eqnarray}
	\begin{aligned}
		u_{tt}+Au+A^\nu u_t-\delta A^\sigma \theta
		+f(u)= h,\quad\text{in} \ \Omega\times [0,\infty],\\
		\theta_t+ A\theta+\delta A^\sigma u_{t}= 0,\quad\, \text{in}\ \Omega\times[0,\infty],
	\end{aligned}\label{p:1:1}
\end{eqnarray}
where $\delta$ is a positive constant, $\nu,\sigma\in[0,1]$ and  $\Omega\subset\mathds{R}^2$ is a domain with smooth boundary $\partial\Omega$. We consider Dirichlet boundary conditions, that is 
\begin{eqnarray}
	u=\theta=0,\quad \text{on} \quad \partial\Omega\times[0,\infty] ,\label{p:1:2}
\end{eqnarray}
and initial conditions
\begin{eqnarray}
	\begin{aligned}
		u(\cdot,0)=u_0(\cdot),\quad  u_t(\cdot,0)=u_1(\cdot),\quad \theta(\cdot,0)=\theta_0(\cdot),& \quad\text{in}\quad  \Omega,
	\end{aligned}\label{p:1:3}
\end{eqnarray}
where $u_0$, $u_1$ and $\theta$ are known functions belonging to appropriate spaces. The operator $A^\gamma:D(A^\gamma)\subset L^2(\Omega)\to L^2(\Omega)$ is the fractional power of order $\gamma\in\mathds{R}$ associated with Laplacian operator \begin{eqnarray}
	A=-\Delta:D(A)\subset L^2(\Omega)\to L^2(\Omega),\quad\text{with domain}\quad D(A)=H^2(\Omega)\cap H_0^1(\Omega).
\end{eqnarray}

Physically, $u$ represents the transverse displacement with respect to $\Omega$ and $\theta$ is the temperature. The system \eqref{p:1:1} possesses an important characteristic: the first equation is nonlinear hyperbolic in unknown $u$ and the second equation is linear parabolic with respect to unknown $\theta$, that is, \eqref{p:1:1} is a nonlinear hyperbolic-parabolic coupled system. Systems of this nature, besides modeling the kinetics of a viscoelastic body with heat conduction (thermoviscoelastic systems), can also model the kinetics of gases and liquids \cite{isbn:9781498749640}. There are many studies considering the asymptotic behavior of thermoelastic system solutions, among them \cite{LIU-ZHENG,RIVERA-FE-1992,PEREIRA-MENZALA-CAM-1989,isbn:9783764388140,doi:10.1017/S0308210500031164,doi:10.1002/mma.1670120308,doi:10.1007/BF00375601,doi:10.2307/43637959,isbn:9781498749640}.

The solutions of the system \eqref{p:1:1}-\eqref{p:1:3} give rise to a family of nonlinear semigroups $S_\beta(t)$, indexed in $\beta=(\nu,\sigma)\in\Lambda:=[0, 1]\times[0, 1]$ defined in a phase space $\pazocal{H}$, therefore resulting in a family of dynamical systems $(\pazocal{H},S_\beta(t))$.  Then, there are issues of considerable interest such as:
\begin{description}
	\item[a)] Existence of global and exponential attractor, $\mathfrak{A}_\beta$ and $\mathfrak{A}_\beta^{exp}$, respectively for $(\pazocal{H}, S_\beta(t))$, $\beta\in\Lambda$;
	\item[b)] The finite dimensionality of the global attractor;
	\item[c)] Regularity of trajectories from the attractor $\mathfrak{A}_\beta$;
	\item[d)] non-explosion property (upper semicontinuity) of attractors family $\{\mathfrak{A}_\beta\}_{\beta\in\Lambda}$ at $(0,0)$. 
\end{description}

The structure of the dynamical system (phase space and evolution operator), obtained by solutions of \eqref{p:1:1}-\eqref{p:1:3}, allows us to apply the concept of quasi-stability, introduced by I. Chueshov and I. Lasicka in \cite{doi:10.1007/s10884-004-4289-x} (see also \cite{doi:10.1007/978-3-319-22903-4,isbn:9780821866535,ISBN:9780387-877112}) to the study of the existence of the global attractor, as well as to the study of some properties of this global attractor. In this work, we will show that the dynamical system $(\pazocal{H},S_\beta (t))$ is quasi-stable on bounded positively invariant sets, for this, we must prove a inequality known as stabilizability estimate \eqref{6:5a}.

The finite fractal dimensionality of global attractor is a very interesting property, there are several approaches for obtaining it, the fractal dimension is a concept of dimension measurement widely used in the theory of infinite dimensional dynamical systems. A compact set with a finite fractal dimension can be considered as the image of a compact subset of $\mathds{R}^n$, for some $n$, by a continuous Hölder application (see \cite{doi:10.1007/978-3-319-22903-4,doi:10.1007/s10884-004-4289-x,isbn:9780821866535,ISBN:9780387-877112,doi:10.2307/24899128}).

To investigate the upper semicontinuity, note that the parameters $\beta\in\Lambda$ gives rise to a family of semigroups $S_\beta(t)$ generated by the solutions of \eqref{p:1:1}-\eqref{p:1:3} and let us prove that the dynamical system $(\pazocal{H},S_\beta(t))$ possesses a compact global attractor $\mathfrak{A}_\beta$ for any $\beta\in\Lambda$. In addition, the dynamical system $(\pazocal{H},S_0(t))$, where $S_{\beta_0}(t)$ is the semigroup generated by the limiting solutions $\beta_0=(0,0)$, that is	
\begin{eqnarray}
	\begin{aligned}
		u_{tt}+Au+u_t-\delta \theta
		+f(u)=&\ h,\quad\text{in} \ \Omega\times[0,\infty],\\
		\theta_t+A\theta+\delta  u_{t}=&\ 0,\quad\, \text{in}\ \Omega\times[0,\infty],
	\end{aligned}\label{1:1}
\end{eqnarray}
also possesses a compact global attractor $\mathfrak{A}_{\beta_0}$. We must prove that
\begin{eqnarray}
	\lim_{\beta\to 0}d_{\pazocal{H}}\{\mathfrak{A}_\beta|\mathfrak{A}_{\beta_0}\}=0,\label{1:2}
\end{eqnarray}
where $d_{\pazocal{H}}\{\mathfrak{A}_\beta|\mathfrak{A}_{\beta_0}\}$ is the Hausdorff semidistance between $\mathfrak{A}_\beta$ and $\mathfrak{A}_{\beta_0}$ defined in Section \ref{sec:4}.

\section{Notations and Assumptions}\label{sec:2}

In this paper, we represent by $\|\cdot\|_q$ the norm in the Lebesgue space $L^q(\Omega)$ for $1\leq q\leq\infty$. When $q=2$, we user $(\cdot,\cdot)_2$ to represent the inner product in $L^2(\Omega)$ that is given by. In addition, we use standard notations for Sobolev spaces $H^i(\Omega)$, for $i=1,2,3$, and $H^{-1}(\Omega)$ is the dual of $H_0^1(\Omega)$. For $\gamma\in\mathds{R}$, we consider the Hilbert spaces $D(A^{\gamma/2})$ endowed with the inner product and the norm given, respectively, by
\begin{eqnarray}
	(u,v)_{D(A^{\gamma/2})}=(A^{\gamma/2}u, A^{\gamma/2}v)_2\quad\text{and}\quad \|u\|_{D(A^{\gamma/2})}=\|A^{\gamma/2}u\|_2.\label{3:1}
\end{eqnarray} 
Moreover, for $\gamma_2>\gamma_1$ with $0\leq \gamma_2-\gamma_1\leq 1$ the embeddings $D(A^{\gamma_2/2})\hookrightarrow D(A^{\gamma_1/2})$ are compact and
\begin{eqnarray}
	\|u\|_{D(A^{\gamma_1/2})}\leq \kappa\|u\|_{D(A^{\gamma_2/2})},\quad\forall u\in D(A^{\gamma_2/2}), \quad \text{with}\quad \kappa=\max\{\lambda_1^{-1/2},1\},\label{3:1a}
\end{eqnarray}
where $\lambda_1$ is the first eigenvalue of Laplacian (in this case, embedding constants $\kappa$ do not depend on $\gamma_1$ or $\gamma_2$). We adopt the following notations for the spaces
\begin{eqnarray}
	V_0=L^2(\Omega),\quad V_1=D(A^{1/2})=H_0^1(\Omega), \quad V_2=D(A)=H^2(\Omega)\cap H_0^1(\Omega).
\end{eqnarray}
Throughout this paper, we consider the following assumptions:
\begin{assumption}\label{asp:3:1}
	For external force, let us consider $h\in L^2(\Omega)$.
\end{assumption}

\begin{assumption}\label{asp:3:2}
	Concerning the nonlinear term $f:\mathds{R}\to\mathds{R}$, we assume that $f$ is $C^1(\mathds{R})$ function and there exist $p\geq 0$, $C_f>0$ and $m_F>0$ such that
	\begin{eqnarray}
		|f'(s)|\leq C_f(1+|s|^p), \quad\forall s\in\mathds{R}\label{3:2}
	\end{eqnarray}
	and 
	\begin{eqnarray}
		f(s)s\geq 0\quad\text{and}\quad F(s)\geq  -m_F, \quad\forall s\in\mathds{R},\label{3:3}
	\end{eqnarray}
	where $F(s)=\int_0^sf(\tau)d\tau$.
\end{assumption}
\begin{remark}
	Note that integrating \eqref{3:2} over $[0,s]$, we obtain 
	\begin{eqnarray}
		\begin{aligned}
			|f(s)-f(0)|\leq C_f\bigg(|s|+\frac{|s|^{p+1}}{p+1}\bigg)\leq C_f(|s|+|s|^{p+1})\leq \\
			C_f(|s|^{p+1}+1+|s|^{p+1})\leq C_f(1+2|s|^{p+1}).
		\end{aligned}
	\end{eqnarray}
	From \eqref{3:3} $(f(s)s\geq 0)$ we have $f(0)=0$. Therefore,
	\begin{eqnarray}
		|f(s)|\leq C_f(1+2|s|^{p+1})\leq 2C_f(1+|s|^{p+1}).\label{3:4}
	\end{eqnarray}
	Based on \eqref{3:4} and on definition of $F(s)$, we have
	\begin{eqnarray}
		\begin{aligned}
			|F(s)|\leq \int_0^s|f(\tau)|d\tau\leq 2C_f\int_0^s(1+|\tau|^{p+1})d\tau\leq 2C_f\bigg(|s|+\frac{|s|^{p+2}}{p+2}\bigg)\leq\\ 2C_f\bigg(|s|^{p+2}+1+|s|^{p+2}\bigg).
		\end{aligned}
	\end{eqnarray}
	Considering $C_F=4C_f>0$, we arrive at
	\begin{eqnarray}
		|F(s)|\leq C_F(1+|s|^{p+2}),\quad\forall s\in\mathds{R}.\label{3:5}
	\end{eqnarray}
\end{remark}

In this section, we establish the existence and uniqueness of a global solution (in time) for  \eqref{p:1:1}-\eqref{p:1:3}. The main result of this section is Theorem \ref{thm:5:1}. First, let us consider the following Hilbert space
\begin{eqnarray}
	\pazocal{H}:=V_1\times V_0\times V_0,\label{4:1}
\end{eqnarray}
endowed with the inner product
\begin{eqnarray}
	\begin{aligned}
		\langle(u,\varphi, \theta),(\widetilde{u},\widetilde{\varphi},\widetilde{\theta})\rangle_{\pazocal{H}}:=&\ (A^{1/2} u,A^{1/2}\widetilde{u})_2
		+(\varphi,\widetilde{\varphi})_2+(\theta,\widetilde{\theta})_2
	\end{aligned}\label{4:2}
\end{eqnarray}
and the norm induced by \eqref{4:2}
\begin{eqnarray}
	\|(u,\varphi,\theta)\|^2_{\pazocal{H}}=\|A^{1/2} u\|_2^2+\|\varphi\|_2^2+\|\theta\|_2^2.\label{4:3}
\end{eqnarray}
Considering $U(t)=(u(t),u_t(t),\theta(t))$, $U_0=(u_0,u_1,\theta_0)$ and according to the previous settings, we can rewrite \eqref{p:1:1}-\eqref{p:1:3} in the form of following initial value problem in $\pazocal{H}$
\begin{eqnarray}
	\left\{\begin{array}{rcl}
		U_t(t)+\pazocal{A}U(t)&=&\pazocal{F}(U(t)),\quad t>0,\\
		U(0)&=&U_0\in \pazocal{H},
	\end{array}\right.\label{4:4}
\end{eqnarray}
where $\pazocal{A}:D(\pazocal{A})\subset\pazocal{H}\to\pazocal{H}$ and $\pazocal{F}:\pazocal{H}\to\pazocal{H}$ are defined by
\begin{eqnarray}
	\pazocal{A}\left(\begin{array}{c} u\\\varphi\\ \theta
	\end{array}\right)=\left(\begin{array}{c} -\varphi \\ Au+A^\nu\varphi-\delta A^\sigma\theta  \\ A\theta+\delta A^\sigma \varphi
	\end{array}\right)\label{4:5}
\end{eqnarray}
with
\begin{eqnarray}
	\begin{aligned}
		D(\pazocal{A}):=&\Big\{(u,\varphi,\theta)\in\pazocal{H};\ u,\theta \in H^2(\Omega),\ \varphi\in V_1\Big\},
	\end{aligned}\label{4:6}
\end{eqnarray}
and
\begin{eqnarray}
	\pazocal{F}\left(\begin{array}{c} u\\  \varphi\\ \theta
	\end{array}\right)=\left(\begin{array}{c} 0\\ h-f(u)\\ 0
	\end{array}\right).\label{4:7}
\end{eqnarray}

\begin{definition}
	A {\it strong solution} to \eqref{4:4} on $[0,T)$ is a continuous function $U:[0,T)\to \pazocal{H}$, continuously differentiable on $(0,T)$, $U(t)\in D(\pazocal{A})$ for any  $t\in (0,T)$ and \eqref{4:4} holds on $[0,T)$.
\end{definition}

\begin{definition}
	A {\it mild solution} to \eqref{4:4} on $[0,T]$ is a continuous function $U:[0,T]\to\pazocal{H}$ satisfying the following integral equation
	\begin{eqnarray}
		U(t)=T(t)U_0+\int_0^tT(t-s)\pazocal{F}(U(s))ds,\quad 0\leq t\leq T,\label{4:8}
	\end{eqnarray}
	where $T(t)$ is the semigroup generated by $\pazocal{A}$. As can be seen in \cite{doi:10.1007/978-1-4612-5561-1}[Chapter 6], any strong solution of \eqref{4:4} is also a mild solution.
\end{definition}

\begin{definition}
	A {\it weak solution} to \eqref{p:1:1}-\eqref{p:1:3} is a function $U=(u,u_t,\theta)\in C([0,\infty);\pazocal{H})$ with $U(0)=(u_0,u_1,\theta_0)$ such that the following identity is satisfied in the distributional sense 
	\begin{eqnarray}
		\begin{aligned}
			\frac{d}{dt}(u_t,\varphi)_2+(A^{1/2}u,A^{1/2}\varphi)_2+(A^\nu u_t,\varphi
			)_2-\delta(A^\sigma\theta,\varphi)_2+(f(u),\varphi)_2=(h,\varphi)_2,\\
			\frac{d}{dt}(\theta,\phi)_2+(A^{1/2}\theta,A^{1/2}\phi)_2+\delta (A^\sigma u_t,\phi)_2=0,
		\end{aligned}
	\end{eqnarray}
	for all $\varphi,\phi\in V_1$.
\end{definition}

\begin{remark}
	By adapting an argument found in \cite{doi:10.1007/978-1-4612-5561-1}[Chapter 4] it is possible to show that every mild solution on $[0, T] $ is the uniform limit on $[0,T']$, for every $0<T'<T$, of strong solutions to \eqref{4:4}. 
\end{remark}

\subsection{Energy of Solutions}

Let $U(t)=(u(t),u_t(t),\theta(t))$ be a strong solution of \eqref{4:4} on $[0,T)$. The {\it energy} $E=E(t)$ associated to $U(t)$ is defined by
\begin{eqnarray}
	E(t):=\frac{1}{2}\Big[\|A^{1/2} u(t)\|_2^2+\|u_t(t)\|_2^2+ \|\theta(t)\|_2^2\Big]= \frac{1}{2}\|U(t)\|^2_{\pazocal{H}},\label{4:9}
\end{eqnarray}
and the {\it modified energy} $\mathcal{E}(t)$ given by
\begin{eqnarray}
	\mathcal{E}(t):= E(t)+\int_\Omega F(u)dx-\int_\Omega hudx.\label{4:10}
\end{eqnarray}

\begin{lemma}\label{lem:4:1}
	The modified energy  \eqref{4:10} associated to solution $U(t)=(u(t),u_t(t),\theta(t))$ to \eqref{4:4} on $[0,T)$ is nonincreasing and satisfies
	\begin{eqnarray}
		\mathcal{E}(t)\geq \frac{1}{4}\|U(t)\|_{\pazocal{H}}^2-K,\quad t\in [0,T),\label{4:11}
	\end{eqnarray}
	and 
	\begin{eqnarray}
		\mathcal{E}(t)\leq M(\|U(t)\|_{\pazocal{H}}^{p+2} +1),\quad t\in [0,T),\label{4:11a}
	\end{eqnarray}
	for some positive constants $K$ and $M$ independent of $U$, $\nu$ and $\sigma$. As a consequence of \eqref{4:11}, one obtains 
	\begin{eqnarray}
		\lim_{t\to T^-}\|U(t)\|_{\pazocal{H}}<\infty.\label{4:12}
	\end{eqnarray}
\end{lemma}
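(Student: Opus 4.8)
The plan is to differentiate the modified energy $\mathcal{E}(t)$ along a strong solution and exploit the dissipative structure of the system, then combine the resulting monotonicity with the coercivity/boundedness bounds \eqref{4:11}--\eqref{4:11a} to rule out blow-up. First I would take the $L^2$ inner product of the first equation in \eqref{p:1:1} with $u_t$ and of the second equation with $\theta$, and add. The terms $(Au,u_t)_2$ and $(f(u),u_t)_2$ produce the exact derivatives $\tfrac12\tfrac{d}{dt}\|A^{1/2}u\|_2^2$ and $\tfrac{d}{dt}\int_\Omega F(u)\,dx$; the term $-(h,u_t)_2$ gives $-\tfrac{d}{dt}\int_\Omega h u\,dx$; and the crucial cancellation is that the coupling terms $-\delta(A^\sigma\theta,u_t)_2$ and $+\delta(A^\sigma u_t,\theta)_2$ are negatives of each other (using self-adjointness of $A^\sigma$) and drop out. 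What remains is
\begin{eqnarray}
	\frac{d}{dt}\mathcal{E}(t) = -\|A^{\nu/2}u_t(t)\|_2^2 - \|A^{1/2}\theta(t)\|_2^2 \leq 0,\nonumber
\end{eqnarray}
which shows $\mathcal{E}$ is nonincreasing on $[0,T)$; in particular $\mathcal{E}(t)\le\mathcal{E}(0)$.

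Next I would establish the two-sided bounds. For \eqref{4:11}, I use $F(s)\ge -m_F$ from \eqref{3:3}, so $\int_\Omega F(u)\,dx \ge -m_F|\Omega|$, and I absorb the linear term $-\int_\Omega h u\,dx$ using Cauchy--Schwarz, the Poincaré inequality $\|u\|_2\le\lambda_1^{-1/2}\|A^{1/2}u\|_2$, and Young's inequality with a small parameter: $\left|\int_\Omega h u\,dx\right| \le \tfrac14\|A^{1/2}u\|_2^2 + C\|h\|_2^2$. Since $E(t)=\tfrac12\|U(t)\|_{\pazocal H}^2$, this yields $\mathcal{E}(t)\ge \tfrac12\|U(t)\|_{\pazocal H}^2 - \tfrac14\|A^{1/2}u\|_2^2 - C\|h\|_2^2 - m_F|\Omega| \ge \tfrac14\|U(t)\|_{\pazocal H}^2 - K$ with $K:=C\|h\|_2^2+m_F|\Omega|$, which is independent of $U,\nu,\sigma$. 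For \eqref{4:11a}, I bound $\int_\Omega F(u)\,dx \le C_F\int_\Omega(1+|u|^{p+2})\,dx$ using \eqref{3:5}; the term $\int_\Omega|u|^{p+2}\,dx = \|u\|_{p+2}^{p+2}$ is controlled by the Sobolev embedding $H_0^1(\Omega)\hookrightarrow L^{p+2}(\Omega)$ (valid in dimension $2$ for every finite $p$), so $\int_\Omega F(u)\,dx \le C(\|A^{1/2}u\|_2^{p+2}+1) \le C(\|U(t)\|_{\pazocal H}^{p+2}+1)$. Together with $E(t)\le\tfrac12\|U(t)\|_{\pazocal H}^2$ and $\left|\int_\Omega h u\,dx\right|\le C(\|U(t)\|_{\pazocal H}^2+1)$, and using $\|U\|_{\pazocal H}^2 \le \|U\|_{\pazocal H}^{p+2}+1$, we get \eqref{4:11a} with a suitable $M$.

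Finally, \eqref{4:12} follows immediately: for $t\in[0,T)$, the monotonicity gives $\tfrac14\|U(t)\|_{\pazocal H}^2 - K \le \mathcal{E}(t) \le \mathcal{E}(0)$, hence $\|U(t)\|_{\pazocal H}^2 \le 4(\mathcal{E}(0)+K)$ uniformly in $t$, so $\limsup_{t\to T^-}\|U(t)\|_{\pazocal H} \le 2(\mathcal{E}(0)+K)^{1/2}<\infty$. I expect the main subtlety to be the differentiation step: strictly speaking the identity for $\tfrac{d}{dt}\mathcal{E}$ requires enough regularity to justify the pairings $(Au,u_t)_2$, $(A^\nu u_t,u_t)_2=\|A^{\nu/2}u_t\|_2^2$ and $(A\theta,\theta)_2=\|A^{1/2}\theta\|_2^2$ and the chain rule $\tfrac{d}{dt}\int_\Omega F(u)\,dx = \int_\Omega f(u)u_t\,dx$ — this is exactly why the lemma is stated for strong solutions, where $U(t)\in D(\pazocal A)$, so all these quantities are well defined and the computation is legitimate; for weak or mild solutions one would pass to the limit from strong solutions using the density remark stated above, and $\mathcal{E}$ would satisfy the inequality rather than the identity.
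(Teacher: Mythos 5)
Your proposal is correct and follows essentially the same route as the paper: the energy identity $\tfrac{d}{dt}\mathcal{E}=-\|A^{\nu/2}u_t\|_2^2-\|A^{1/2}\theta\|_2^2$ via cancellation of the coupling terms, the lower bound from $F\ge -m_F$ plus Young/Poincar\'e absorbing $\int_\Omega hu\,dx$ into $\tfrac14\|A^{1/2}u\|_2^2$, the upper bound from \eqref{3:5} and the Sobolev embedding $V_1\hookrightarrow L^{p+2}(\Omega)$, and the uniform bound $\|U(t)\|_{\pazocal H}^2\le 4(\mathcal{E}(0)+K)$ giving \eqref{4:12}. Your closing remark about working with strong solutions and extending to mild solutions by density matches the paper's treatment as well.
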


\begin{proof}
	Suppose $U(t)$ is a strong solution. Taking the inner products (in $V_0$) of $\eqref{p:1:1}_1$ with $u_t$, $\eqref{p:1:1}_2$ with $\theta$ and adding the results, we obtain
	\begin{eqnarray}
		\frac{d\mathcal{E}}{dt}(t)=-\|A^{1/2}\theta\|_2^2-\|A^{\nu/2}u_t\|_2^2.
		\label{4:13}
	\end{eqnarray}
	Therefore, $\mathcal{E}(t)$ is nonincreasing, in particular
	\begin{eqnarray}
		\mathcal{E}(t)\leq\mathcal{E}(0),\quad t\in [0,T).\label{4:16}
	\end{eqnarray}
	To show \eqref{4:11}, we start by integrating \eqref{3:3} over $\Omega$ with respect to $x$, to get
	\begin{eqnarray}
		\begin{aligned}
			\int_\Omega F(u)dx\geq  -|\Omega| m_F.\label{4:17}
		\end{aligned}
	\end{eqnarray}
	Now, applying Young's inequality and \eqref{3:1a}, we obtain
	\begin{eqnarray}
		\begin{aligned}
			\int_\Omega  hudx\leq \kappa^2\|h\|_2^2+\frac{1}{4}\|A^{1/2} u\|_2^2. \label{4:18}
		\end{aligned}
	\end{eqnarray}
	Combining \eqref{4:10}, \eqref{4:17} and \eqref{4:18}, we arrive at
	\begin{eqnarray}
		\begin{aligned}
			\mathcal{E}(t)\geq&\   \frac{1}{4}\|A^{1/2}u\|_2^2+ \frac{1}{2}\|u_t\|_2^2+
			\frac{1}{2}\|\theta\|_2^2-\underbrace{\bigg[\kappa^2\|h\|_2^2+|\Omega|m_F\bigg]}_{K}.
		\end{aligned}\label{4:19}
	\end{eqnarray}
	From \eqref{4:19} we obtain \eqref{4:11}, and from \eqref{4:11} and \eqref{4:16} we obtain \eqref{4:12}. On the other hand, from \eqref{3:5}, we have 
	\begin{eqnarray}
		\int_\Omega F(u)dx\leq C_F(|\Omega|+\|u\|_{p+2}^{p+2}).\label{4:19a}
	\end{eqnarray}
	From embedding $V_1\hookrightarrow L^{p+2}(\Omega)$, there exists a positive constant $C_{|\Omega|}$, depending only on $\Omega$ and on $p+2$, such that
	\begin{eqnarray}
		\begin{aligned}
			C_F(|\Omega|+\|u\|_{p+2}^{p+2})\leq C_F(|\Omega|+C_{|\Omega|}\|A^{1/2}u\|_2^{p+2})\leq C_1(1+(\|A^{1/2}\|^2_2)^{(p+2)/2})\leq\\ C_1(1+\|U(t)\|^{p+2}_{\pazocal{H}}),
		\end{aligned}\label{4:19b}
	\end{eqnarray}
	where $C_1=\max\{C_F|\Omega|,C_FC_{|\Omega|}\}$. From \eqref{4:18}, we have
	\begin{eqnarray}
		\begin{aligned}
			\int_\Omega  hudx\leq \kappa^2\|h\|_2^2+\|A^{1/2} u\|_2^2\leq \kappa^2\|h\|^2+\|U(t)\|_{\pazocal{H}}^2\leq \kappa^2\|h\|^2+\|U(t)\|_{\pazocal{H}}^{p+2}+1\leq \\ C_2(1+\|U(t)\|_{\pazocal{H}}^{p+2})\label{4:19c}
		\end{aligned}
	\end{eqnarray}
	where $C_2=\kappa^2\|h\|^2+1$. Now, note that
	\begin{eqnarray}
		\|U(t)\|^2_{\pazocal{H}}\leq \|U(t)\|^{p+2}_{\pazocal{H}}+1.\label{4:19d}
	\end{eqnarray}
	From \eqref{4:10} and \eqref{4:19a}-\eqref{4:19d} and considering $M=C_1+C_2+1$ we obtain \eqref{4:11a}. Applying a density argument, we can prove that \eqref{4:11} and \eqref{4:11a} are valid for every mild solution on $[0, T']$ for any $0<T'<T$.
\end{proof}

\subsection{Existence of Solution}

\begin{lemma}\label{lem:5:1}
	The operator $\pazocal{A}$ is m-accretive.
\end{lemma}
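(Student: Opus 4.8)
The plan is to establish the two defining properties of an m-accretive operator on the Hilbert space $\pazocal{H}$: first, that $\pazocal{A}$ is accretive, i.e. $\langle\pazocal{A}U,U\rangle_{\pazocal{H}}\ge 0$ for every $U\in D(\pazocal{A})$; and second, that $\pazocal{A}$ is maximal, which, $\pazocal{A}$ being accretive, reduces to the range condition $R(I+\pazocal{A})=\pazocal{H}$.

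For the accretivity, take $U=(u,\varphi,\theta)\in D(\pazocal{A})$ and compute $\langle\pazocal{A}U,U\rangle_{\pazocal{H}}$ directly from \eqref{4:2} and \eqref{4:5}, splitting each fractional power as $A^{\gamma}=A^{\gamma/2}A^{\gamma/2}$ and moving one factor across the $L^2$ inner product; this is legitimate since $u,\theta\in D(A)$ and $\varphi\in V_1\subset D(A^{\gamma/2})$ for every $\gamma\in[0,1]$. The contribution $-(A^{1/2}\varphi,A^{1/2}u)_2$ of the first component of $\pazocal{A}U$ cancels the term $(Au,\varphi)_2=(A^{1/2}u,A^{1/2}\varphi)_2$ from the second component, and the two thermal coupling contributions $-\delta(A^{\sigma/2}\theta,A^{\sigma/2}\varphi)_2$ and $+\delta(A^{\sigma/2}\varphi,A^{\sigma/2}\theta)_2$ cancel each other. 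What survives is
\begin{eqnarray}
	\langle\pazocal{A}U,U\rangle_{\pazocal{H}} &=& \|A^{\nu/2}\varphi\|_2^2+\|A^{1/2}\theta\|_2^2\ \geq\ 0,\nonumber
\end{eqnarray}
so $\pazocal{A}$ is accretive.

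For the range condition, fix $(f_1,f_2,f_3)\in\pazocal{H}$ and look for $(u,\varphi,\theta)\in D(\pazocal{A})$ with $(I+\pazocal{A})(u,\varphi,\theta)=(f_1,f_2,f_3)$. The first component reads $u-\varphi=f_1$, forcing $\varphi=u-f_1$; substituting into the other two components leaves the elliptic system
\begin{eqnarray}
	\left\{\begin{array}{rcl}
		(I+A+A^{\nu})u-\delta A^{\sigma}\theta &=& f_2+f_1+A^{\nu}f_1,\\
		\delta A^{\sigma}u+(I+A)\theta &=& f_3+\delta A^{\sigma}f_1,
	\end{array}\right.\nonumber
\end{eqnarray}
for the pair $(u,\theta)$. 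I would solve this by Lax--Milgram on $V_1\times V_1$: testing the first equation against $v\in V_1$, the second against $w\in V_1$, adding them, and writing all fractional terms through $A^{\nu/2},A^{\sigma/2}$ gives a bilinear form whose boundedness on $V_1\times V_1$, and boundedness of the associated right-hand side functional, follow from the Cauchy--Schwarz inequality together with the embedding \eqref{3:1a} (available because $\nu/2,\sigma/2\le 1/2$ and $f_1\in V_1$). Coercivity is immediate: on the diagonal $(v,w)=(u,\theta)$ the two coupling terms $\mp\delta(A^{\sigma/2}u,A^{\sigma/2}\theta)_2$ cancel, leaving the quantity $\|u\|_2^2+\|A^{1/2}u\|_2^2+\|A^{\nu/2}u\|_2^2+\|\theta\|_2^2+\|A^{1/2}\theta\|_2^2$, which dominates $\|A^{1/2}u\|_2^2+\|A^{1/2}\theta\|_2^2$. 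Lax--Milgram then yields a unique weak solution $(u,\theta)\in V_1\times V_1$, and elliptic regularity applied to each scalar equation (the perturbations $A^{\nu},A^{\sigma}$ being of lower order than $A$) upgrades it to $u,\theta\in H^2(\Omega)$. With $\varphi:=u-f_1\in V_1$ we obtain $(u,\varphi,\theta)\in D(\pazocal{A})$, and reversing the substitution shows it solves $(I+\pazocal{A})(u,\varphi,\theta)=(f_1,f_2,f_3)$; hence $R(I+\pazocal{A})=\pazocal{H}$, and, with accretivity, $\pazocal{A}$ is m-accretive.

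The accretivity is a short computation once the coupling is seen to be skew-symmetric; the real work is the range condition. The key points there are (a) to set up the resolvent problem so that the thermal coupling remains skew-symmetric, which is what makes the bilinear form coercive despite the minus sign in front of $\delta A^{\sigma}\theta$, and (b) to dominate the fractional-order terms via \eqref{3:1a}. I expect the delicate step to be the passage from the $V_1\times V_1$ weak solution to an honest element of $D(\pazocal{A})$, i.e. the $H^2$-regularity of $u$ and $\theta$; this is where the ranges of $\nu,\sigma$ and the regularity $f_1\in V_1$ of the data are used.
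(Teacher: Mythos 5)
Your proposal is correct and follows essentially the same route as the paper: the same cancellation of the skew-symmetric coupling terms gives $\langle\pazocal{A}U,U\rangle_{\pazocal{H}}=\|A^{\nu/2}\varphi\|_2^2+\|A^{1/2}\theta\|_2^2$, and the range condition is handled by the same substitution $\varphi=u-u^*$ followed by Lax--Milgram on $V_1\times V_1$ with the identical coercivity cancellation and recovery of $H^2$-regularity from the two scalar equations. The only cosmetic difference is that the paper reads $Au,A\theta\in V_0$ directly off the reduced equations rather than invoking elliptic regularity by name, and closes by citing a lemma for the equivalence of the range condition with m-accretivity.
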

\begin{proof}
	From definition of $\pazocal{A}$ in \eqref{4:5} and \eqref{4:2}, we have
	\begin{eqnarray}
		\langle\pazocal{A}U,U\rangle_{\pazocal{H}}=\|A^{\nu/2}\varphi\|_2^2+\|A^{1/2}\theta\|_2^2,\quad\forall U=(u,\varphi,\theta)\in D(\pazocal{A}).\label{5:1}
	\end{eqnarray}
	Moreover, we have that $R(I+\pazocal{A})=\pazocal{H}$. In order to prove this, given $F=(u^*,\varphi^*,\theta^*)\in\pazocal{H}$, we must obtain $U=(u,\varphi,\theta)\in D(\pazocal{A})$, such that
	\begin{eqnarray}
		(I+\pazocal{A})U=U^*.\label{5:2}
	\end{eqnarray}
	The previous inequality is equivalent to the following system
	\begin{eqnarray}
		u-\varphi&=&u^*,\label{5:2a}\\
		\varphi +Au+A^\nu\varphi-\delta A^\sigma\theta&=&\varphi^*,\label{5:2b}\\
		\theta +A\theta +\delta A^\sigma\varphi&=&\theta^*,\label{5:2c}
	\end{eqnarray}
	By using \eqref{5:2a} into \eqref{5:2b} and into \eqref{5:2c}, we obtain
	\begin{eqnarray}
		u +A u+A^\nu u-\delta A^\sigma\theta&=&\varphi^*+u^*+A^\nu u^*,\label{5:3a}\\
		\theta +A\theta +\delta A^\sigma u&=&\theta^*+\delta A^\sigma u^*,	\label{5:3b}
	\end{eqnarray}
	Note that the right sides of \eqref{5:3a}-\eqref{5:3b} belong to $V_0$. To solve \eqref{5:3a}-\eqref{5:3b} we use a standard variational approach in order to obtain the bilinear functional $\pazocal{G}:(V_1\times V_1)^2\to\mathbb{R}$ given by
	\begin{eqnarray}
		\begin{aligned}
			\pazocal{G}\Big((u,\theta),(\widetilde{u},\widetilde{\theta})\Big):=&\ (u,\widetilde{u})_2+(A^{1/2} u,A^{1/2} \widetilde{u})_2+(A^{\nu/2} u,A^{\nu/2}\widetilde{u})_2-\delta(A^{\sigma/2}\theta,A^{\sigma/2}\widetilde{u})_2+\\ &\ (\theta,\widetilde{\theta})_2+(A^{1/2}\theta,A^{1/2}\widetilde{\theta})_2+\delta(A^{\sigma/2} u,A^{\sigma/2}\widetilde{\theta})_2.
		\end{aligned}
	\end{eqnarray}
	By using Holder's inequality and \eqref{3:1a} it is not difficult to verify that $\pazocal{G}$ is continuous. In addition 
	\begin{eqnarray}
		\begin{aligned}
			\pazocal{G}\Big((u,\theta),(u,\theta)\Big):=&\ \|u\|_2^2+\|A^{1/2} u\|_2^2+\|A^{\nu/2} u\|_2^2+\|\theta\|_2^2+ \|A^{1/2}\theta\|_2^2,
		\end{aligned}
	\end{eqnarray}
	which shows that $\pazocal{G}$ is coercive. Therefore, it follows from the Lax-Milgram's  Theorem that the system \eqref{5:3a}-\eqref{5:3b} has a unique solution $(u,\theta)\in V_1\times V_1$. From \eqref{5:2a}  we have $\varphi\in V_1$, from \eqref{5:3b}, we have $A\theta\in V_0$ and thus $\theta\in H^2(\Omega)$, from \eqref{5:3a}, we have $A u\in L^2(\Omega)$ and thus $u\in H^2(\Omega)$. Therefore  $(u,\varphi,\theta)\in D(\pazocal{A})$ and the result follows from \cite{ISBN:9781584884521}[Lemma 2.2.3].
\end{proof}

\begin{lemma}\label{lem:5:4}
	The operator $\pazocal{F}$ is locally Lipschitz.
\end{lemma}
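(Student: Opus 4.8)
The plan is to prove Lipschitz continuity of $\pazocal{F}$ on an arbitrary ball of $\pazocal{H}$. Fix $U_i=(u_i,\varphi_i,\theta_i)\in\pazocal{H}$, $i=1,2$, and set $R:=\max\{\|U_1\|_{\pazocal{H}},\|U_2\|_{\pazocal{H}}\}$. Since $\pazocal{F}(u,\varphi,\theta)=(0,h-f(u),0)$ by \eqref{4:7}, the constant term $h$ cancels in the difference, so by \eqref{4:3},
\begin{eqnarray*}
\|\pazocal{F}(U_1)-\pazocal{F}(U_2)\|_{\pazocal{H}}=\|f(u_1)-f(u_2)\|_2 .
\end{eqnarray*}
Hence it suffices to bound $\|f(u_1)-f(u_2)\|_2$ by a constant depending only on $R$ times $\|u_1-u_2\|_{V_1}$, because $\|u_1-u_2\|_{V_1}=\|A^{1/2}(u_1-u_2)\|_2\le\|U_1-U_2\|_{\pazocal{H}}$.

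First I would use the mean value theorem together with the growth condition \eqref{3:2}: writing $f(u_1)-f(u_2)=f'(\xi)(u_1-u_2)$ with $\xi$ between $u_1$ and $u_2$ and using $|\xi|\le|u_1|+|u_2|$ gives, pointwise on $\Omega$,
\begin{eqnarray*}
|f(u_1)-f(u_2)|\le C_f\big(1+(|u_1|+|u_2|)^p\big)\,|u_1-u_2| .
\end{eqnarray*}
Squaring, integrating over $\Omega$ and applying Hölder's inequality with conjugate exponents $2$ and $2$ yields
\begin{eqnarray*}
\|f(u_1)-f(u_2)\|_2^2\le C_f^2\,\big\|1+(|u_1|+|u_2|)^p\big\|_4^{\,2}\,\|u_1-u_2\|_4^{\,2} .
\end{eqnarray*}
Here the two-dimensionality of $\Omega$ enters in an essential way: the embedding $V_1=H_0^1(\Omega)\hookrightarrow L^q(\Omega)$ holds for every finite $q$, so both the $L^4$ and the $L^{4p}$ norms are controlled by the $V_1$ norm. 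Thus $\|1+(|u_1|+|u_2|)^p\|_4\le|\Omega|^{1/4}+C(\|u_1\|_{V_1}+\|u_2\|_{V_1})^p\le C'(1+R^p)$ and $\|u_1-u_2\|_4\le C''\|u_1-u_2\|_{V_1}$, whence
\begin{eqnarray*}
\|f(u_1)-f(u_2)\|_2\le L(R)\,\|u_1-u_2\|_{V_1}\le L(R)\,\|U_1-U_2\|_{\pazocal{H}},
\end{eqnarray*}
with $L(R)=C_fC'C''(1+R^p)$ nondecreasing in $R$. Since the first and third components of $\pazocal{F}$ vanish identically, this is exactly the assertion that $\pazocal{F}$ is locally Lipschitz; taking $u_2=0$ and recalling $f(0)=0$ also shows in passing that $\pazocal{F}$ maps $\pazocal{H}$ into itself.

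I do not expect any real obstacle here. The only point needing a little care is the bookkeeping of the Hölder exponents and the observation that the estimate relies on the unrestricted Sobolev embedding available in dimension two (for a general exponent $p\ge0$); in particular, when $p=0$ the pointwise bound becomes $|f(u_1)-f(u_2)|\le 2C_f|u_1-u_2|$ and the conclusion is immediate. Everything else is a routine computation.
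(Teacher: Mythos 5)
Your proposal is correct and follows essentially the same route as the paper: mean value theorem combined with the growth bound \eqref{3:2}, then H\"older's inequality, then the unrestricted two-dimensional Sobolev embedding $V_1\hookrightarrow L^q(\Omega)$ to absorb everything into the $V_1$-norms. The only (immaterial) difference is your choice of H\"older exponents $(2,2)$, producing $L^{4p}$ and $L^4$ norms, where the paper uses $(3/2,3)$, producing $L^{3p}$ and $L^6$ norms; both are controlled by $V_1$ in dimension two.
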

\begin{proof}
	Let $U=(u,\varphi,\theta)$,  $\widetilde{U}=(\widetilde{u},\widetilde{\varphi},\widetilde{\theta})$ in $\pazocal{H}$ and $\overline{K}>0$ such that 	
	\begin{eqnarray}
		\|U\|_{\pazocal{H}},\|\widetilde{U}\|_{\pazocal{H}}\leq\overline{K}.\label{5:8}
	\end{eqnarray}	
	Throughout the proof, $C$ and $C(\overline{K})$ represent generic positive constants independent of $K$ and depending on $\overline{K}$, respectively. From \eqref{4:7}, we have
	\begin{eqnarray}
		\|\pazocal{F}(U)-\pazocal{F}(\widetilde{U})\|^2_{\pazocal{H}}=\int_\Omega|f(u)-f(\widetilde{u})|^2dx.\label{5:9}
	\end{eqnarray}
	By using Mean Value Theorem, Schwarz's inequality and \eqref{3:2}, for any $x\in\Omega$ there exist $\vartheta=\vartheta(x)\in(0,1)$, such that
	\begin{eqnarray}
		\begin{aligned}
			\int_\Omega|f(u)-f(\widetilde{u})|^2dx \leq \int_\Omega| f'((1-\vartheta)u+\vartheta \widetilde{u})|^2|u-\widetilde{u}|^2dx\leq\\  C\int_\Omega (1+|(1-\vartheta)u+\vartheta \widetilde{u})|^p)^2|u-\widetilde{u}|^2dx
			\leq C\int_\Omega(1+|u|^p+|\widetilde{u}|^p)^2|u-\widetilde{u}|^2dx.
		\end{aligned}\label{5:10}
	\end{eqnarray}
	It follows from Hölder's inequality
	\begin{eqnarray}
		\begin{aligned}
			\int_\Omega(1+|u|^p+|\widetilde{u}|^p)^2|u-\widetilde{u}|^2dx \leq   C\bigg(\int_\Omega(1+|u|^p+|\widetilde{u}|^p)^3\bigg)^{2/3}\bigg(\int_\Omega|u-\widetilde{u}|^6\bigg)^{1/3}
			\leq \\ C(1+\|u\|_{3p}+\|\widetilde{u}\|_{3p})^{2p}\|u-\widetilde{u}\|_6^{2}.
		\end{aligned}\label{5:11}
	\end{eqnarray}
	From Sobolev embedding (in 2D) $V_1\hookrightarrow L^s(\Omega)$ for $0\leq s < \infty$ and \eqref{5:8}, we have
	\begin{eqnarray}
		\begin{aligned}
			(1+\|u\|_{3p}+\|\widetilde{u}\|_{3p})^{2p}\|u-\widetilde{u}\|_6^{2}\leq C(1+\|A^{1/2} u\|_2+\|A^{1/2} \widetilde{u}\|_2)^{2p}\|A^{1/2} u-A^{1/2} \widetilde{u}\|_2^{2}\leq\\  C(\overline{K})\|U-\widetilde{U}\|_{\pazocal{H}}^2,\label{5:12}
		\end{aligned}
	\end{eqnarray}
	where $C(\overline{K})$ is a positive constant depending on $K$. Combining \eqref{5:9}-\eqref{5:12}, we obtain the result.
\end{proof}

\begin{theorem}[{\bf Existence of Global Solution}]\label{thm:5:1}
	Regarding the problem \eqref{4:4} we have:
	\begin{description}
		\item[a)] If $U_0\in\pazocal{H}$ then \eqref{4:4} has a unique global (in time) mild solution;
		\item[b)] If $U_0\in D(\pazocal{A})$, then the mild solution obtained in (a) is strong solution;
		\item[c)] If $U^1(t)$ and $U^2(t)$ are two mild solutions to \eqref{4:4}, then there exists a positive constant $C_0=C_0(U^1(0),U^2(0))$ depending on $U^1(0)$ and on $U^2(0)$  such that, for every $T>0$ we have
		\begin{eqnarray}
			\|U^1(t)-U^2(t)\|_{\pazocal{H}}\leq e^{C_0 t}\|U^1(0)-U^2(0)\|_{\pazocal{H}},\quad 0\leq t\leq T.\label{5:13}
		\end{eqnarray}
	\end{description}
\end{theorem}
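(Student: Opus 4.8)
The plan is to combine the three structural facts already established --- Lemma~\ref{lem:5:1} ($\pazocal{A}$ is m-accretive), Lemma~\ref{lem:5:4} ($\pazocal{F}$ is locally Lipschitz), and the a priori bound of Lemma~\ref{lem:4:1} --- with the classical theory of semilinear evolution equations (see \cite{doi:10.1007/978-1-4612-5561-1}[Chapter 6]). Since $\pazocal{A}$ is m-accretive, $-\pazocal{A}$ generates a $C_0$-semigroup of contractions $T(t)$ on $\pazocal{H}$, so $\|T(t)\|\le 1$ for all $t\ge 0$. Together with the local Lipschitz property of $\pazocal{F}$, this produces, for each $U_0\in\pazocal{H}$, a unique mild solution of \eqref{4:8} on a maximal interval $[0,T_{\max})$, together with the blow-up alternative: either $T_{\max}=\infty$, or $\limsup_{t\to T_{\max}^-}\|U(t)\|_{\pazocal{H}}=\infty$.

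For part (a), I would rule out the second alternative using Lemma~\ref{lem:4:1}. By the density argument already noted there, \eqref{4:11} and \eqref{4:11a} hold for the mild solution on every $[0,T']\subset[0,T_{\max})$, and since $\mathcal{E}$ is nonincreasing (inequality \eqref{4:16}),
\begin{eqnarray*}
\|U(t)\|_{\pazocal{H}}^2 \le 4\big(\mathcal{E}(t)+K\big)\le 4\big(\mathcal{E}(0)+K\big)\le 4\big(M(\|U_0\|_{\pazocal{H}}^{p+2}+1)+K\big),\qquad t\in[0,T_{\max}),
\end{eqnarray*}
which is finite and independent of $t$ (and of $\beta$). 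Hence $\limsup_{t\to T_{\max}^-}\|U(t)\|_{\pazocal{H}}<\infty$, forcing $T_{\max}=\infty$, so the mild solution is global and unique.

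For part (b), I would invoke the regularity part of the abstract framework: when $U_0\in D(\pazocal{A})$ and $\pazocal{F}$ is locally Lipschitz, the mild solution is locally Lipschitz in $t$, hence $s\mapsto\pazocal{F}(U(s))$ is locally Lipschitz, and therefore $U$ is continuously differentiable on $(0,\infty)$ with $U(t)\in D(\pazocal{A})$ and $U_t+\pazocal{A}U=\pazocal{F}(U)$, i.e. $U$ is a strong solution (\cite{doi:10.1007/978-1-4612-5561-1}[Chapter 6]). For part (c), let $U^1,U^2$ be two mild solutions; by part (a) both are global, and by the bound above there is $\overline{K}=\overline{K}(U^1(0),U^2(0))$ with $\|U^i(t)\|_{\pazocal{H}}\le\overline{K}$ for all $t\ge 0$. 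Subtracting the integral equations \eqref{4:8} for $U^1$ and $U^2$, using $\|T(t)\|\le 1$ and the Lipschitz estimate of Lemma~\ref{lem:5:4} on the ball of radius $\overline{K}$, one gets
\begin{eqnarray*}
\|U^1(t)-U^2(t)\|_{\pazocal{H}} \le \|U^1(0)-U^2(0)\|_{\pazocal{H}} + C(\overline{K})\int_0^t\|U^1(s)-U^2(s)\|_{\pazocal{H}}\,ds ,
\end{eqnarray*}
and Gronwall's inequality yields \eqref{5:13} with $C_0=C(\overline{K})$, depending only on $U^1(0)$ and $U^2(0)$. The main obstacle is the regularity step (b): passing from a mild to a strong solution requires the Lipschitz-in-time bootstrap on $U$, the one point where more than the soft fixed-point argument is needed; everything else is a routine application of the contraction-semigroup framework combined with the uniform energy bound of Lemma~\ref{lem:4:1}.
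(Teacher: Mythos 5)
Your proposal is correct and follows essentially the same route as the paper: local well-posedness from the m-accretivity of $\pazocal{A}$ plus the local Lipschitz property of $\pazocal{F}$ (the paper cites Zheng's Theorems 2.5.4--2.5.5 where you cite Pazy, but the content is identical), globality via the a priori bound of Lemma~\ref{lem:4:1} (which is exactly what \eqref{4:12} encodes), and Gr\"onwall for \eqref{5:13}. Your version merely spells out the blow-up alternative and the integral-equation subtraction that the paper leaves to the cited references.
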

\begin{proof}
	{\bf (a)-(b).} Since $\pazocal{A}$ is m-accretive and $\pazocal{F}$ is local Lipschitz, It follows from \cite{ISBN:9781584884521}[Theorem 2.5.4] that there exist $T_{\text{max}}>0$ such that if $U_0\in\pazocal{H}$ then \eqref{4:4} has a unique mild solution on $[0,T_{\text{max}})$ and if $U_0\in D(\pazocal{A})$ then \eqref{4:4} has a unique strong solution on $[0,T_{\text{max}})$, in addition, \eqref{4:12} holds. Therefore, it follows from \cite{ISBN:9781584884521}[Theorem 2.5.5] that $T_{\text{max}}=\infty$.\\
	
	{\bf (c).} Using a standard procedure that consists of considering the norm of the difference between two mild solutions, the local Lipschitz continuity of $\pazocal{F}$ and then applying the Grönwall's inequality (see \cite{doi:10.1007/978-1-4612-5561-1}[Section 6.1]), it is possible to obtain \eqref{5:13}.
\end{proof}

\begin{remark}\label{rem:3:1a}
	
	A (global) mild solution $U(t)$ to \eqref{4:4} produces a nonlinear semigroup $S_\beta(t)$, $\beta=(\nu,\sigma)\in\Lambda$ on $\pazocal{H}$ given by
	\begin{eqnarray}
		S_\beta(t)U_0:=U(t)=(u(t),u_t(t),\theta(t)), \quad U_0=(u_0,u_1,\theta_0)\in\pazocal{H}.\label{5:18}
	\end{eqnarray}
	Thanks to the regularity of the solution $U$, we have $S_\beta(t)$ a $C_0$-semigroup,  therefore $(\pazocal{H},S_\beta(t))$ is a dynamical system.
\end{remark}

\section{Global Attractor and Finite Dimensionality}

The goal of this section is to prove that the dynamical system $(\pazocal{H},S_\beta(t))$, given in Remark \ref{rem:3:1a}, possesses a compact global attractor (see Definition \ref{def:AR:2}). The main result of this section is

\begin{theorem}\label{thm:4:1}
	The dynamical system $(\pazocal{H},S_\beta(t))$ possesses a compact global attractor $\mathfrak{A}_\beta=\mathcal{M}(\mathcal{N}_\beta)$, where $\mathcal{M}(\mathcal{N}_\beta)$ is the  unstable manifold emanating from set of stationary points $\mathcal{N}_\beta$ of $(\pazocal{H},S_\beta(t))$ (see Definition \ref{def:AR:4}).
\end{theorem}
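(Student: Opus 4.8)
The plan is to apply the standard characterization of global attractors via gradient systems (see \cite{doi:10.1007/978-3-319-22903-4,isbn:9780821866535}), which asserts that if $(\pazocal{H},S_\beta(t))$ is a gradient system that is asymptotically smooth, has a bounded set of stationary points, and whose Lyapunov functional is bounded on bounded sets and coercive in the appropriate sense, then it possesses a compact global attractor coinciding with the unstable manifold $\mathcal{M}(\mathcal{N}_\beta)$ of the set of equilibria $\mathcal{N}_\beta$. Thus the proof breaks into four verifications, and I would carry them out in the following order.

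First, I would show that the modified energy $\mathcal{E}(t)$ from \eqref{4:10} is a strict Lyapunov functional for $(\pazocal{H},S_\beta(t))$: by Lemma \ref{lem:4:1} it is nonincreasing along trajectories (and continuous on $\pazocal{H}$ by the growth estimate \eqref{4:11a} together with the embedding $V_1\hookrightarrow L^{p+2}(\Omega)$), and from the dissipation identity \eqref{4:13}, $\frac{d}{dt}\mathcal{E}(t)=-\|A^{1/2}\theta\|_2^2-\|A^{\nu/2}u_t\|_2^2\le 0$ with equality on an interval forcing $\theta\equiv 0$ and $A^{\nu/2}u_t\equiv 0$; feeding this back into \eqref{p:1:1} shows $u_t\equiv 0$, hence $U(t)$ is a stationary point. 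So the system is gradient. Second, I would check that the set $\mathcal{N}_\beta$ of stationary solutions is bounded in $\pazocal{H}$: a stationary point satisfies $Au+f(u)=h$, $\theta=0$, $u_t=0$; pairing with $u$ and using $f(s)s\ge 0$ from \eqref{3:3} and \eqref{3:1a} gives $\|A^{1/2}u\|_2^2\le(h,u)_2\le \kappa^2\|h\|_2^2+\tfrac14\|A^{1/2}u\|_2^2$, whence a uniform bound independent of $\beta$.

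The third and main step is asymptotic smoothness of $(\pazocal{H},S_\beta(t))$. Here I would use the quasi-stability machinery of Chueshov--Lasiecka \cite{doi:10.1007/s10884-004-4289-x,doi:10.1007/978-3-319-22903-4}: it suffices to establish a stabilizability estimate of the form
\begin{eqnarray}
	\|S_\beta(t)U^1-S_\beta(t)U^2\|_{\pazocal{H}}^2 \leq C\,e^{-\omega t}\|U^1-U^2\|_{\pazocal{H}}^2 + C_T\,\sup_{0\le s\le t}\big(\text{lower-order seminorm of } u^1(s)-u^2(s)\big)^2
\end{eqnarray}
on bounded positively invariant sets (this is the estimate \eqref{6:5a} alluded to in the introduction). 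To get it, set $w=u^1-u^2$, $\psi=\theta^1-\theta^2$, write the difference system, and multiply the first equation by $w_t+\eta w$ for small $\eta>0$ and the second by $\psi$; the good terms are the full energy of the difference plus the parabolic dissipation $\|A^{1/2}\psi\|_2^2$ and the damping $\|A^{\nu/2}w_t\|_2^2$, while the coupling terms $\delta A^\sigma\psi$ and $\delta A^\sigma w_t$ are controlled by \eqref{3:1a} since $\sigma\le 1$ and absorbed (using, when $\sigma>\nu$, interpolation between $A^{\nu/2}w_t$ and $w_t$, and Young's inequality). The nonlinear difference $f(u^1)-f(u^2)$ is the delicate term: using \eqref{3:2}, Hölder and the 2D Sobolev embeddings $V_1\hookrightarrow L^s(\Omega)$ for all $s<\infty$ exactly as in the proof of Lemma \ref{lem:5:4}, one bounds $(f(u^1)-f(u^2),w_t+\eta w)_2$ by $\varepsilon\|A^{1/2}w\|_2^2+\varepsilon\|w_t\|_2^2$ plus a constant (depending on the bounded invariant set) times $\|w\|_2^2$ or $\|w\|_{2-\epsilon}^2$, which is the compact lower-order term. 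Gronwall then yields the stabilizability estimate, and since the map $U_0\mapsto u(\cdot)$ is compact from $\pazocal{H}$ into $C([0,t];V_0)$ (compactness of $V_1\hookrightarrow V_0$ together with the bound on $u_t$), quasi-stability implies asymptotic smoothness. I expect the careful absorption of the fractional coupling terms uniformly in $\beta\in\Lambda$ to be the principal technical obstacle, since both $A^\sigma\theta$ and $A^\sigma u_t$ sit at order up to one and must be dominated by the available dissipation norms $\|A^{1/2}\theta\|_2$ and $\|A^{\nu/2}u_t\|_2$ with $\nu$ possibly $0$.

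Fourth and finally, once the gradient structure, boundedness of $\mathcal{N}_\beta$, and asymptotic smoothness are in hand, I would invoke Lemma \ref{lem:4:1} to see that bounded sets have bounded forward orbits (so the Lyapunov functional is bounded on bounded sets and, via \eqref{4:11}, its sublevel sets are bounded), and conclude by the cited abstract theorem that $(\pazocal{H},S_\beta(t))$ has a compact global attractor $\mathfrak{A}_\beta$ which coincides with the unstable manifold $\mathcal{M}(\mathcal{N}_\beta)$ emanating from the equilibria. All constants in the four steps are independent of $\beta\in\Lambda$, which will be reused later for the upper semicontinuity statement \eqref{1:2}.
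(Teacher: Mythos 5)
Your proposal is correct and follows essentially the same route as the paper: gradient structure via the modified energy as strict Lyapunov function, boundedness of $\mathcal{N}_\beta$, quasi-stability on bounded positively invariant sets (the stabilizability estimate obtained by a perturbed-energy multiplier, which is exactly the paper's $N_1E+N_2(u_t,u)_2$ construction) yielding asymptotic smoothness, and then the abstract gradient-system theorem (Theorem \ref{thm:ar:1:1}). The only minor discrepancy is that the fractional coupling terms you flag as the principal obstacle in fact cancel exactly in the energy identity for the difference system, so they only need to be absorbed in the lower-order functional $(u_t,u)_2$, where \eqref{3:1a} alone suffices and no interpolation between $A^{\nu/2}w_t$ and $w_t$ is required.
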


\subsection{Quasi-Stability}

The property of a dynamical system being quasi-stable on bounded positively invariant sets ensures it is asymptotically smooth (Theorem \ref{thm:ar:1:2}). 

If we consider \eqref{5:18}, $X=Z=V_1$, $Y=V_0$, we obtain $\pazocal{H}=X\times Y\times Z$, with $X$  compactly embedded in $Y$. In addition,
\begin{eqnarray}
	u\in C(\mathds{R}_+;X)\cap C^1(\mathds{R}_+;Y)\quad \text{and}\quad \theta\in C(\mathds{R}_+,Z).\label{6:5}
\end{eqnarray}
Therefore, the structures of the phase space $\pazocal{H}$ and the evolution operator $S_\beta(t)$ satisfy the Assumption \ref{ar:assump:1}. The main result of this subsection is given by the following result:

\begin{theorem}\label{thm:6:4}
	The dynamical system $(\pazocal{H},S_\beta(t))$ is quasi-stable on every bounded positively invariant set $\pazocal{O}\subset\pazocal{H}$ and therefore is asymptotically smooth. More precisely, if $S_\beta(t)U_0^i=(u^i,u_t^i,\theta^i)$ is mild solution of \eqref{p:1:1}-\eqref{p:1:3} with initial data $U_0^i\in\pazocal{O}$, $i=1,2$. Then, there exist nonnegative functions $a,b,c:\mathds{R}_+\to\mathds{R}$ satisfying Definition \ref{ar:def:1:1}{\bf (a)-(b)} and a compact seminorm $\mu$ on $X$  such that
	\begin{eqnarray}
		\|S_\beta(t)U_0^1-S_\beta(t)U_0^2\|_{\pazocal{H}}\leq a(t)\|U^1_0-U^2_0\|_{\pazocal{H}}\label{6:5b}
	\end{eqnarray}
	and
	\begin{eqnarray}
		\|S_\beta(t)U_0^1-S_\beta(t)U_0^2\|^2_{\pazocal{H}}\leq b(t)\|U_0^1-U_0^2\|_{\pazocal{H}}^2 + c(t)\sup_{0\leq s\leq t}[\mu(u^1(s)-u^2(s))]^2.\label{6:5a}
	\end{eqnarray}
\end{theorem}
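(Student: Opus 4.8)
The plan is the following. Estimate \eqref{6:5b} is nothing but \eqref{5:13}: since $\pazocal{O}$ is bounded and positively invariant, Lemma~\ref{lem:4:1} furnishes a uniform bound $\sup_{t\geq0}\|S_\beta(t)U_0^i\|_{\pazocal{H}}\leq R_{\pazocal{O}}$ ($i=1,2$), so the constant $C_0$ in \eqref{5:13} may be taken to depend only on $\pazocal{O}$, and one sets $a(t)=e^{C_{\pazocal{O}}t}$, which is locally bounded. For the stabilizability estimate \eqref{6:5a} I would argue first for strong solutions and then pass to the limit by the density/approximation argument recorded after the definition of weak solution, since every term appearing in \eqref{6:5a} is continuous on $\pazocal{H}$ and all constants below depend only on $R_{\pazocal{O}}$. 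Writing $z=u^1-u^2$ and $w=\theta^1-\theta^2$, the pair $(z,w)$ solves the difference system
\[
z_{tt}+Az+A^\nu z_t-\delta A^\sigma w+f(u^1)-f(u^2)=0,\qquad w_t+Aw+\delta A^\sigma z_t=0,
\]
with homogeneous Dirichlet data, and I set $\mathbb{E}(t):=\tfrac12\|S_\beta(t)U_0^1-S_\beta(t)U_0^2\|_{\pazocal{H}}^2 =\tfrac12(\|A^{1/2}z\|_2^2+\|z_t\|_2^2+\|w\|_2^2)$.

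Taking the inner products in $V_0$ of the first equation with $z_t$ and of the second with $w$ and adding, the skew–symmetric coupling terms $\mp\delta(A^{\sigma/2}w,A^{\sigma/2}z_t)_2$ cancel and
\[
\frac{d}{dt}\mathbb{E}+\|A^{\nu/2}z_t\|_2^2+\|A^{1/2}w\|_2^2=-(f(u^1)-f(u^2),z_t)_2 .
\]
To recover $\|A^{1/2}z\|_2^2$ I would use the multiplier $\Psi(t):=(z_t,z)_2+\tfrac12\|A^{\nu/2}z\|_2^2$; differentiating and inserting the $z$–equation, the contributions $\mp(A^\nu z_t,z)_2$ cancel and
\[
\frac{d}{dt}\Psi=\|z_t\|_2^2-\|A^{1/2}z\|_2^2+\delta(A^\sigma w,z)_2-(f(u^1)-f(u^2),z)_2 .
\]
The coupling term here is estimated by $\delta|(A^{\sigma/2}w,A^{\sigma/2}z)_2|\leq\eta\|A^{1/2}z\|_2^2+C_{\delta,\eta}\|A^{1/2}w\|_2^2$ using $\sigma\le1$ and \eqref{3:1a}. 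For the nonlinear terms, writing $f(u^1)-f(u^2)=g\,z$ with $|g|\le C(1+|u^1|^p+|u^2|^p)$ by \eqref{3:2}, and combining H\"older's inequality, the two–dimensional Sobolev embeddings $V_1\hookrightarrow L^q(\Omega)$ ($q<\infty$), the uniform bound $\|u^i\|_{V_1}\le R_{\pazocal{O}}$, and the Ladyzhenskaya--Gagliardo--Nirenberg inequality $\|z\|_{L^4}^2\le C\|z\|_2\|A^{1/2}z\|_2$, one obtains, for every $\eta>0$,
\[
|(f(u^1)-f(u^2),z)_2|\le\eta\|A^{1/2}z\|_2^2+C(R_{\pazocal{O}},\eta)\|z\|_2^2,
\]
\[
|(f(u^1)-f(u^2),z_t)_2|\le\eta(\|z_t\|_2^2+\|A^{1/2}z\|_2^2)+C(R_{\pazocal{O}},\eta)\|z\|_2^2 .
\]

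Now I would form the Lyapunov functional $\Lambda:=N\mathbb{E}+\Psi$. Using $\|A^{\nu/2}z\|_2\le\kappa\|A^{1/2}z\|_2$ and $\|z_t\|_2^2\le\kappa^2\|A^{\nu/2}z_t\|_2^2$ uniformly in $\nu\in[0,1]$ (here the $\beta$–independence of $\kappa$ in \eqref{3:1a} is essential), together with $\|w\|_2^2\le\lambda_1^{-1}\|A^{1/2}w\|_2^2$, one can fix $N$ large and $\eta$ small, depending only on $\pazocal{O},\delta,\Omega,f$, so that $\Lambda$ is equivalent to $\mathbb{E}$ and
\[
\frac{d}{dt}\Lambda\le -c_0\,\mathbb{E}+C(R_{\pazocal{O}})\|z\|_2^2\le -c_1\Lambda+C(R_{\pazocal{O}})\|z\|_2^2 .
\]
Gr\"onwall's inequality then gives $\Lambda(t)\le e^{-c_1 t}\Lambda(0)+\frac{C(R_{\pazocal{O}})}{c_1}\sup_{0\le s\le t}\|z(s)\|_2^2$, and returning to $\mathbb{E}$,
\[
\|S_\beta(t)U_0^1-S_\beta(t)U_0^2\|_{\pazocal{H}}^2\le b(t)\|U_0^1-U_0^2\|_{\pazocal{H}}^2+c(t)\sup_{0\le s\le t}\|u^1(s)-u^2(s)\|_2^2,
\]
with $b(t)=Ce^{-c_1 t}\to0$ and $c(t)\equiv C(R_{\pazocal{O}})$. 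Taking $\mu(\cdot)=\|\cdot\|_2$, which is a compact seminorm on $X=V_1$ because $V_1\hookrightarrow V_0$ is compact, and checking that $a,b,c$ meet Definition~\ref{ar:def:1:1}\,{\bf(a)-(b)}, yields \eqref{6:5a}; a final density argument transfers the estimate to all mild solutions with data in $\pazocal{O}$, and asymptotic smoothness follows from quasi-stability via Theorem~\ref{thm:ar:1:2}.

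I expect the main difficulties to be twofold. First, choosing the multiplier so that the weak, possibly fractional, damping $A^\nu z_t$ does not obstruct the recovery of $\|A^{1/2}z\|_2^2$; this is handled by the corrector $\tfrac12\|A^{\nu/2}z\|_2^2$ in $\Psi$ and by the exact cancellation of the cross terms, both in the energy identity (the $\delta A^\sigma$ coupling) and in $\frac{d}{dt}\Psi$ (the $A^\nu z_t$ terms). Second, and more important for what follows, keeping every constant uniform in $\beta=(\nu,\sigma)\in\Lambda$: this rests entirely on the $\beta$–independence of the embedding constant $\kappa$ in \eqref{3:1a}, and it is precisely this uniformity that will be needed for the upper–semicontinuity result \eqref{1:2}. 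The treatment of the nonlinearity is routine in dimension two, since $V_1$ embeds into every $L^q(\Omega)$, $q<\infty$, so no restriction on the growth exponent $p$ is required.
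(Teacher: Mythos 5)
Your proof follows essentially the same route as the paper's: an energy identity for the difference system, the multiplier $(z_t,z)_2$, a linear combination turned into an equivalent Lyapunov functional, and Gr\"onwall, with the nonlinearity pushed into a compact seminorm on $V_1$ and all constants kept uniform in $\beta$ via \eqref{3:1a}. The only (harmless) deviations are your corrector $\tfrac12\|A^{\nu/2}z\|_2^2$ in the auxiliary functional where the paper simply estimates the cross term $(A^{\nu/2}u_t,A^{\nu/2}u)_2$ by Young's inequality, and your seminorm $\|\cdot\|_2$ (via Ladyzhenskaya) where the paper uses $\|\cdot\|_6$ --- both compact on $V_1$ in two dimensions --- so the argument is correct.
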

\begin{proof}
	First, by taking $a(t):=e^{C_0t}$ in \eqref{5:13}, we get \eqref{6:5b}. Now, let us consider the following settings, for $i=1,2$, consider $U^i_0=(u^i_0,u_1^i,\theta_0^i)$ $\in \pazocal{O}$, such that
	\begin{eqnarray}
		U^i(t):=S_\beta(t)U_0^i=(u^i(t),u_t^i(t),\theta^i(t)),\label{6:5:1}
	\end{eqnarray}
	is strong solution of \eqref{p:1:1}, so
	\begin{eqnarray}
		U=U^1-U^2=(u,u_t,\theta),\label{6:6}
	\end{eqnarray}
	where $u=u^1-u^2$ and $\theta=\theta^1-\theta^2$, satisfies 
	\begin{eqnarray}
		\begin{aligned}
			u_{tt}+A u+A^\nu u_t-\delta A^\sigma \theta=&\ -G(u),\\
			\theta_t- A\theta+\delta A^\sigma u_{t}=&\ 0,
		\end{aligned}\label{5:14}
	\end{eqnarray}
	with boundary conditions 
	\begin{eqnarray}
		u=\theta=0\quad\text{on}\quad \Gamma\times(0,\infty),
	\end{eqnarray}
	and initial conditions 
	\begin{eqnarray}
		(u(0),u_t(0),\theta(0))=U_0^1-U_0^2,
	\end{eqnarray}
	where
	\begin{eqnarray}
		G(u)=f(u^1)-f(u^2).
	\end{eqnarray}
	
	\hspace{\oddsidemargin}{\bf Step 1.} We claim that there exists a constant $C_{\pazocal{O}}>0$ dependent only on $\pazocal{O}$, such that
	\begin{eqnarray}
		\frac{dE}{dt}(t)\leq -\frac{1}{2}\|A^{\nu/2}u_t\|_2^2 -\|A^{1/2}\theta\|_2^2+C_{\pazocal{O}}\|u\|^2_6.\label{6:9}
	\end{eqnarray}
	Multiplying $\eqref{5:14}_1$ by $u_t$, $\eqref{5:14}_2$ by $\theta $, integrating over $\Omega$ and by using \eqref{3:1a}, we obtain
	\begin{eqnarray}
		\frac{dE}{dt}(t)=-\|A^{\nu/2}u_t\|_2^2-\|A^{1/2}\theta\|_2^2-\int_\Omega G(u)u_tdx.\label{6:9a}
	\end{eqnarray}
	Now, applying the Young's inequality and \eqref{3:1a}, we get
	\begin{eqnarray}
		\begin{aligned}
			\bigg|\int_\Omega G(u)u_tdx\bigg| \leq \int_\Omega|f(u^1)-f(u^2)||u_t|dx
			\leq &\ C_\kappa\int_\Omega|f(u^1)-f(u^2)|^2dx+\frac{\kappa^{-2}}{2}\|u_t\|_2^2\\
			\leq &\ C_\kappa\int_\Omega|f(u^1)-f(u^2)|^2dx+\frac{1}{2}\|A^{\nu}u_t\|_2^2,
		\end{aligned}\label{6:9b}
	\end{eqnarray}
	where $C_\kappa$ is a positive constant (depending on $\kappa$). Performing a calculation analogous to \eqref{5:10}-\eqref{5:11}, we obtain
	\begin{eqnarray}
		\int_\Omega|f(u^1)-f(u^2)|^2dx\leq C(1+\|u^1\|_{3p}+\|u^2\|_{3p})^{2p}\|u\|_6^{2}.
	\end{eqnarray}
	Again using the Sobolev embedding mentioned in Lemma \ref{lem:5:4}, we arrived at
	\begin{eqnarray}
		\int_\Omega|f(u^1)-f(u^2)|^2dx\leq C(1+\|A^{1/2} u^1\|^2_2+\|A^{1/2} u^2\|^2_2)^p\|u\|_6^{2}\leq C_{\pazocal{O}}\|u\|_6^{2},\label{6:9c}
	\end{eqnarray}
	where $C_{\pazocal{O}}$ is a positive constant dependent only on $\pazocal{O}$ (because $\pazocal{O}$ is positively invariant).
	Combining \eqref{6:9a}, \eqref{6:9b} and \eqref{6:9c}, we get \eqref{6:9}.\\
	
	\hspace{\oddsidemargin}{\bf Step 2.} For function $\pazocal{J}=\pazocal{J}(t)$, given by
	\begin{eqnarray}
		\pazocal{J}(t)=(u_t,u)_2.\label{6:10}
	\end{eqnarray}
	The following inequality holds
	\begin{eqnarray}
		\begin{aligned}
			\pazocal{J}'(t)\leq &\ -\frac{1}{2}\|A^{1/2} u\|_2^2+\frac{5\kappa^2}{2}\|A^{\nu/2} u_t\|_2^2+\frac{3\kappa^4}{2}\|A^{1/2} \theta\|_2^2 +C_{\pazocal{O}}\|u\|^2_6,\label{6:11}
		\end{aligned}
	\end{eqnarray}
	where $C_1>0$ and $C_2>0$ are constants. In fact, taking the derivative of $\pazocal{J}$ and using $\eqref{5:14}_1$, we get
	\begin{eqnarray}
		\pazocal{J}'(t)=-\|A^{1/2} u\|_2^2-(A^{\nu/2} u_t,A^{\nu/2}u)_2+\delta(A^{\sigma/2}\theta,A^{\sigma/2} u)_2-\int_\Omega G(u)udx+\|u_t\|_2^2.\label{6:12}
	\end{eqnarray}
	Applying the Hölder's and Young's inequalities, we have 
	\begin{eqnarray}
		\begin{aligned}
			\pazocal{J}'(t)\leq  -\|A^{1/2} u\|_2^2+\bigg(\frac{3\kappa^2}{2}\|A^{\nu/2} u_t\|_2^2+\frac{1}{6\kappa^2}\|A^{\nu/2}u\|_2^2\bigg)+\\
			\bigg(\frac{3\kappa^2}{2}\|A^{\sigma/2} \theta\|_2^2+\frac{1}{6\kappa^2}\|A^{\sigma/2} u\|_2^2 \bigg)
			 -\int_\Omega G(u)udx+ \|u_t\|_2^2.\label{6:13}
		\end{aligned}
	\end{eqnarray}
	By using \eqref{3:1a} in \eqref{6:13}, we obtain
	\begin{eqnarray}
		\begin{aligned}
			\pazocal{J}'(t)\leq  -\|A^{1/2} u\|_2^2+\bigg(\frac{3\kappa^2}{2}\|A^{\nu/2} u_t\|_2^2+\frac{1}{6}\|A^{1/2}u\|_2^2\bigg)+\\
			\bigg(\frac{3\kappa^4}{2}\|A^{1/2} \theta\|_2^2+\frac{1}{6}\|A^{1/2} u\|_2^2 \bigg) 
			-\int_\Omega G(u)udx+\kappa^2\|A^{\nu/2}u_t\|_2^2.\label{6:13a}
		\end{aligned}
	\end{eqnarray}  
	Performing a procedure analogous to \eqref{6:9b}-\eqref{6:9c} and applying the Poincaré's  inequality, we have
	\begin{eqnarray}
		\int_\Omega G(u)udx\leq C_{\pazocal{O}}\|u\|_6^2+\frac{1}{6}\|A^{1/2} u\|_2^2\label{6:14}.
	\end{eqnarray}
	Combining \eqref{6:13a} and \eqref{6:14}, we arrive at \eqref{6:11}.
	\\
	
	\hspace{\oddsidemargin}{\bf Step 3.} Consider the funcional
	\begin{eqnarray}
		\pazocal{L}(t)=N_1E(t)+N_2\pazocal{J}(t),\label{6:15}
	\end{eqnarray}
	where $N_1$ and $N_2$ are positive constants to be established later. By using Young's and Poincaré's inequalities in \eqref{6:10}, we can show that there exists a positive constant $\gamma_0$ such that
	\begin{eqnarray}
		|\pazocal{L}(t)-N_1E(t)|\leq \gamma_0 E(t),\label{6:16}
	\end{eqnarray}
	thus
	\begin{eqnarray}
		(N_1-\gamma_0)E(t)\leq \pazocal{L}(t)\leq (N_1+ \gamma_0) E(t).\label{6:17}
	\end{eqnarray}
	From \eqref{6:9}, \eqref{6:11} and \eqref{6:15}, we have
	\begin{eqnarray}
		\begin{aligned}
			\pazocal{L}'(t)=&\ N_1E'(t)+N_2\pazocal{J}'(t)\\
			\pazocal{L}'(t)\leq&\ N_1\bigg(-\frac{1}{2}\|A^{\nu/2}u_t\|_2^2 -\|A^{1/2}\theta\|_2^2+C_{\pazocal{O}}\|u\|^2_6\bigg)+\\
			&\ N_2\bigg(-\frac{1}{2}\|A^{1/2} u\|_2^2+\frac{5\kappa^2}{2}\|A^{\nu/2} u_t\|_2^2+\frac{3\kappa^4}{2}\|A^{1/2} \theta\|_2^2 +C_{\pazocal{O}}\|u\|^2_6\bigg)\\
			\leq &\ -\bigg(\frac{N_1}{2}-\frac{5N_2\kappa^2}{2}\bigg)\|A^{\nu/2}u_t\|_2^2-\frac{N_2}{2}\|A^{1/2} u\|_2^2-\Big(N_1-\frac{3N_2\kappa^4}{2}\Big)\|A^{1/2}\theta\|_2^2+ \\
			&\ \Big(N_1+N_2\Big)C_{\pazocal{O}}\|u\|^2_6.
		\end{aligned}\label{6:18}
	\end{eqnarray}
	Now, taking $N_1$ sufficiently large and taking into account \eqref{3:1a},  we obtain positive constants $\gamma_1$, $\gamma_2$ and $\gamma_3$ such that
	\begin{eqnarray}
		\gamma_1E(t)\leq \pazocal{L}(t)\leq \gamma_2 E(t)\label{6:19}
	\end{eqnarray}
	and
	\begin{eqnarray}
		\pazocal{L}'(t)\leq -\gamma_3E(t)+C_{\pazocal{O}}\|u\|_6^2.\label{6:20}
	\end{eqnarray}
	By using \eqref{6:19} and \eqref{6:20}, we arrive at
	\begin{eqnarray}
		\pazocal{L}'(t)\leq -\frac{\gamma_3}{\gamma_2}\pazocal{L}(t)+C_{\pazocal{O}}\|u\|_6^2.\label{6:20a}
	\end{eqnarray}
	From inequality above, we obtain
	\begin{eqnarray}
		\frac{d}{dt}\Big\{e^{\frac{\gamma_3}{\gamma_2}t}\pazocal{L}(t)\Big\}\leq C_{\pazocal{O}} e^{\frac{\gamma_3}{\gamma_2}t} \|u\|_6^2.\label{6:20b}
	\end{eqnarray}
	Integrating the inequality above over $[0,t]$, we get
	\begin{eqnarray}
		\pazocal{L}(t)\leq \pazocal{L}(0)e^{-\frac{\gamma_3}{\gamma_2}t} + C_{\pazocal{O}} \int_0^te^{\frac{\gamma_3}{\gamma_2}(s-t)}\|u(s)\|_6^2ds.\label{6:20c}
	\end{eqnarray}
	By using \eqref{6:19} into \eqref{6:20c} we obtain
	\begin{eqnarray}
		E(t)\leq\frac{\gamma_2}{\gamma_1}E(0)e^{-\frac{\gamma_3}{\gamma_2}t} + \frac{C_{\pazocal{O}}}{\gamma_1} \int_0^te^{\frac{\gamma_3}{\gamma_2}(s-t)}ds\cdot\sup_{0\leq s\leq t}[\mu(u(s))]^2,\label{6:23}
	\end{eqnarray}
	where $\mu$ is the seminorm on $V_1$ defined by
	\begin{eqnarray}
		\mu (z)=\|z\|_6.
	\end{eqnarray}
	From \eqref{6:5:1}, \eqref{6:6} and \eqref{6:23}, we obtain
	\begin{eqnarray}
		\begin{aligned}
			\|S_\beta(t)U_0^1-S\nu(t)U_0^2\|^2_{\pazocal{H}}\leq\frac{\gamma_2}{\gamma_1}\|U_0^1-U_0^2\|_{\pazocal{H}}^2e^{-\frac{\gamma_3}{\gamma_2}t} +\\
			\frac{2C_{\pazocal{O}}}{\gamma_1} \int_0^te^{\frac{\gamma_3}{\gamma_2}(s-t)}ds\cdot\sup_{0\leq s\leq t}[\mu(u^1(s)-u^2(s))]^2.
		\end{aligned}
	\end{eqnarray}
	So, by taking 
	\begin{eqnarray}
		b(t):=\frac{\gamma_2}{\gamma_1}e^{-\frac{\gamma_3}{\gamma_2}t} \quad\text{and}\quad  c(t):= \frac{C_{\pazocal{O}}}{\gamma_1} \int_0^te^{\frac{\gamma_3}{\gamma_2}(s-t)}ds,\label{6:23:1}
	\end{eqnarray}
	we obtain \eqref{6:5a}. It is worth mentioning that, using a density argument, the inequality \eqref{6:5a} holds also for mild solutions at $[0,T]$, for all $T>0$.
\end{proof}

\begin{remark}\label{rem:4:3}
	Note that the constants obtained in Theorem \ref{thm:6:4} independ of $\nu$ and $\sigma$ since embedding constants for $\nu,\sigma \in(0,1/2)$ are uniform by \eqref{3:1a}.
\end{remark}

\subsection{Gradient System}

By Definition \ref{def:AR:4}, the set $\mathcal{N}_\beta$, mentioned in Theorem \ref{thm:4:1} is defined as
\begin{eqnarray}
	\mathcal{N}_\beta=\{U_0 \in\pazocal{H}; \ S_\beta(t)U_0=U_0, \ \forall t \geq
	0\}.\label{6:23a}
\end{eqnarray}
Therefore, if $U_0=(u_0,u_1,\theta_0)\in\mathcal{N}_\beta $, then from \eqref{4:8}
\begin{eqnarray}
	U_0=T(t)U_0+\int_0^tT(t-s)\pazocal{F}(U_0)ds,
\end{eqnarray}
and for every $t>0$
\begin{eqnarray}
	-\frac{(T(t)-I)}{t}U_0=\frac{1}{t}\int_0^tT(\tau)\pazocal{F}(U_0)d\tau.
\end{eqnarray}
The limit as $t\to 0$ on the right side of the previous equality exists and is equal to $\pazocal{F}(U_0) $, so $U_0\in D(\pazocal{A}) $, and satisfies
\begin{eqnarray}
	\pazocal{A}U_0+\pazocal{F}(U_0)=0.\label{6:23b}
\end{eqnarray}
Considering \eqref{4:5}, \eqref{4:7} and \eqref{6:23b}, we arrive at
\begin{eqnarray}
	\mathcal{N}_\beta=\{(u,0,0)\in D(\pazocal{A})\ |\   
	A u+ f(u)= h\},\label{6:23c}
\end{eqnarray}
in this case, $\mathcal{N}_\beta$ is independent of $\beta$. Based on \eqref{6:23c} we obtain the following lemma
\begin{lemma}
	The set of stationary points $\mathcal{N}_\beta$ of $(\pazocal{H},S_\beta(t))$ is bounded.
\end{lemma}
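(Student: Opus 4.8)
The plan is to read off a uniform a priori bound for the $V_1$-norm of any stationary displacement directly from the elliptic identity $Au+f(u)=h$ that characterizes $\mathcal{N}_\beta$ in \eqref{6:23c}. Note first that if $(u,0,0)\in\mathcal{N}_\beta$, then $(u,0,0)\in D(\pazocal{A})$, so $u\in H^2(\Omega)\cap H_0^1(\Omega)$ and the identity $Au+f(u)=h$ holds in $L^2(\Omega)$; in particular, since $H^2(\Omega)\hookrightarrow L^\infty(\Omega)$ in dimension two, $f(u)$ is bounded, so $f(u)\in L^2(\Omega)$ and the pairing of the identity with $u\in H_0^1(\Omega)$ below is legitimate (alternatively one can justify integrability of $f(u)u$ via \eqref{3:4}).

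First I would take the $L^2$-inner product of $Au+f(u)=h$ with $u$. Using $(Au,u)_2=\|A^{1/2}u\|_2^2$ together with the sign condition $f(s)s\ge 0$ from \eqref{3:3}, which gives $(f(u),u)_2=\int_\Omega f(u)u\,dx\ge 0$, one obtains
\[
\|A^{1/2}u\|_2^2 \;\le\; \|A^{1/2}u\|_2^2 + (f(u),u)_2 \;=\; (h,u)_2 .
\]
Next I would bound the right-hand side by the Cauchy--Schwarz inequality and the Poincar\'e-type embedding \eqref{3:1a} with $\gamma_1=0$, $\gamma_2=1$, i.e.\ $\|u\|_2\le\kappa\|A^{1/2}u\|_2$:
\[
(h,u)_2 \;\le\; \|h\|_2\,\|u\|_2 \;\le\; \kappa\,\|h\|_2\,\|A^{1/2}u\|_2 .
\]
Combining the two displays and dividing by $\|A^{1/2}u\|_2$ (the case $u\equiv 0$ being trivial) yields $\|A^{1/2}u\|_2\le \kappa\|h\|_2$. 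Since the second and third components of a stationary point vanish, this means $\|(u,0,0)\|_{\pazocal{H}}=\|A^{1/2}u\|_2\le\kappa\|h\|_2$, so $\mathcal{N}_\beta$ is contained in the closed ball of $\pazocal{H}$ of radius $\kappa\|h\|_2$ centered at the origin, and is therefore bounded; by \eqref{6:23c} the bound is moreover independent of $\beta$, which is convenient for the later upper-semicontinuity analysis.

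I do not expect a genuine obstacle: the proof is the standard one-line elliptic energy estimate, and the only structural inputs are the dissipativity-type assumption \eqref{3:3} on $f$ and the Poincar\'e inequality \eqref{3:1a}. The single point deserving a sentence of care is the admissibility of testing the equation with $u$, namely the integrability of $f(u)u$ and finiteness of $(f(u),u)_2$, which is handled by the $H^2$-regularity of stationary solutions and the two-dimensional embedding $H^2(\Omega)\hookrightarrow L^\infty(\Omega)$ as indicated above.
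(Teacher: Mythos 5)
Your proof is correct and follows essentially the same route as the paper: test $Au+f(u)=h$ with $u$, drop the nonnegative term $(f(u),u)_2$ via \eqref{3:3}, and control $(h,u)_2$ by the Poincar\'e-type inequality \eqref{3:1a}, arriving at the same bound $\|A^{1/2}u\|_2\le\kappa\|h\|_2$ (the paper uses Young's inequality where you use Cauchy--Schwarz and division, a cosmetic difference). Your added remark on the admissibility of the test function is a small extra care the paper omits.
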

\begin{proof}
	if $(u,0,0)\in\mathcal{N}_\alpha$ then $u\in V_2$ satisfies
	\begin{eqnarray}
		A u+f(u)=h.\label{6:2:3}
	\end{eqnarray}
	Multiplying $\eqref{6:2:3}$ by $u$ integrating over $\Omega$,  considering \eqref{3:3} and applying Young's and Poincaré's inequalities, we obtain
	\begin{eqnarray}
		\|A^{1/2} u\|_2^2 = -\int_\Omega f(u)udx+(h,u)_2\leq \frac{\kappa^2}{2}\|h\|_2^2+\frac{1}{2}\|A^{1/2}u\|_2^2,
	\end{eqnarray}
	thus,
	\begin{eqnarray}
		\|(u,0,0)\|^2_{\pazocal{H}}=\|A^{1/2} u\|_2^2 \leq  \kappa^2\|h\|_2^2.\label{6:2:3a}
	\end{eqnarray}
	Therefore, $\mathcal{N}_\beta$ is bounded.
\end{proof}

Let us now consider the function $\Phi:\pazocal{H}\to\mathds{R}$, define by
\begin{eqnarray}
	\Phi(u,\varphi,\theta)=\frac{1}{2}\|(u,\varphi,\theta)\|^2_{\pazocal{H}}+\int_\Omega F(u)dx-(h,u)_2.\label{6:2:6}
\end{eqnarray}
Note that $\Phi$ is continuous because $F$ is continuous (Assumption \ref{asp:3:2}). From Definition \ref{def:AR:5}, we have
\begin{lemma}
	The dynamical system $(\pazocal{H},S_\beta(t))$ is gradient, with $\Phi$ given in \eqref{6:2:6} a strict Lyapunov function for $(\pazocal{H},S_\beta(t))$.
\end{lemma}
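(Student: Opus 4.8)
The plan is to observe that, along any solution, $\Phi$ reduces to the modified energy of Lemma~\ref{lem:4:1}: if $U(t)=(u(t),u_t(t),\theta(t))=S_\beta(t)U_0$, then comparing \eqref{4:9}, \eqref{4:10} and \eqref{6:2:6} one has $\Phi(S_\beta(t)U_0)=E(t)+\int_\Omega F(u)\,dx-(h,u)_2=\mathcal{E}(t)$. Continuity of $t\mapsto\Phi(S_\beta(t)U_0)$ is then immediate from the continuity of $\Phi$ together with $S_\beta(t)$ being a $C_0$-semigroup (Remark~\ref{rem:3:1a}). Monotonicity holds for strong solutions by Lemma~\ref{lem:4:1}, and it passes to a general mild solution $U$ by approximating $U$ on compact time intervals by strong solutions $U^n$ with data $U^n_0\to U_0$ (as recalled in Section~\ref{sec:2}) and noting that the non-increasing maps $t\mapsto\Phi(U^n(t))$ converge pointwise to $t\mapsto\Phi(U(t))$ by continuity of $\Phi$. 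Hence the first requirement of Definition~\ref{def:AR:5} is met.

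For the strict part, assume $\Phi(S_\beta(t)U_0)=\Phi(U_0)$ for all $t\geq0$; we must show $U_0\in\mathcal{N}_\beta$. First suppose $U_0\in D(\pazocal{A})$, so that $U(\cdot)$ is a strong solution; then \eqref{4:13} gives
\[
0=\frac{d}{dt}\Phi(S_\beta(t)U_0)=\frac{d\mathcal{E}}{dt}(t)=-\|A^{1/2}\theta(t)\|_2^2-\|A^{\nu/2}u_t(t)\|_2^2,\qquad t\geq0 .
\]
Both nonnegative terms must vanish for every $t$; since $A^{\gamma/2}$ has trivial kernel for each $\gamma\in[0,1]$ (and $A^0=I$), this forces $\theta\equiv0$ and $u_t\equiv0$, hence $u\equiv u_0$. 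Therefore $S_\beta(t)U_0=(u_0,0,0)=U_0$ for all $t\geq0$, i.e.\ $U_0\in\mathcal{N}_\beta$ by \eqref{6:23a}.

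For an arbitrary $U_0\in\pazocal{H}$ the regularity hypothesis is removed by density. Fix $T>0$ and choose strong solutions $U^n=(u^n,u^n_t,\theta^n)$ with $U^n_0\to U_0$ converging to $U$ uniformly on compact subintervals of $[0,T]$. Integrating \eqref{4:13} for $U^n$ yields
\[
\int_0^t\big(\|A^{1/2}\theta^n\|_2^2+\|A^{\nu/2}u^n_t\|_2^2\big)\,ds=\Phi(U^n_0)-\Phi(U^n(t)),
\]
whose right-hand side tends to $\Phi(U_0)-\Phi(U(t))=0$. Bounding the $V_0$-norms by the dissipation terms through the Poincaré-type inequality \eqref{3:1a} (applied with exponents $0$ and $1$ for $\theta^n$, and $0$ and $\nu$ for $u^n_t$), one gets $\int_0^t(\|\theta^n\|_2^2+\|u^n_t\|_2^2)\,ds\to0$; since $\theta^n\to\theta$ and $u^n_t\to u_t$ in $C([0,t];V_0)$, this forces $\int_0^t(\|\theta\|_2^2+\|u_t\|_2^2)\,ds=0$, so $\theta\equiv0$ and $u_t\equiv0$ on $[0,T]$. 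As $T>0$ was arbitrary, $S_\beta(t)U_0=U_0$ for all $t$, hence $U_0\in\mathcal{N}_\beta$, which completes the proof that $\Phi$ is a strict Lyapunov function and therefore that $(\pazocal{H},S_\beta(t))$ is gradient.

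The only delicate point is this last limiting step: one cannot pass directly to the limit in $\|A^{1/2}\theta^n\|_2^2$ because $\theta^n\to\theta$ only in $V_0$, and the remedy is precisely to dominate $\|\theta^n\|_2^2$ and $\|u^n_t\|_2^2$ by the dissipation through \eqref{3:1a} before letting $n\to\infty$; the rest is a direct reading of Lemma~\ref{lem:4:1} and identity \eqref{4:13}. Finally, all constants involved are independent of $\beta=(\nu,\sigma)\in\Lambda$, so the Lyapunov structure is uniform over the family.
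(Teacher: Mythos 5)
Your proposal is correct and follows essentially the same route as the paper: identify $\Phi$ with the modified energy $\mathcal{E}$, use the dissipation identity \eqref{4:13} together with \eqref{3:1a} to conclude $u_t\equiv\theta\equiv 0$ when $\Phi$ is constant along the orbit, and extend to mild solutions by density. The only difference is that you spell out the limiting step in the density argument (passing from $\|A^{1/2}\theta^n\|_2$ to $\|\theta\|_2$ via the dissipation bound), which the paper leaves implicit.
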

\begin{proof}
	If $S_\beta(t)U_0=(u(t),u_t(t),\theta(t))$ is a strong solution of \eqref{4:4}, then from Lemma \ref{lem:4:1}, for any $U_0=(u_0,u_1,\theta_0)\in\pazocal{H}$, $t\mapsto\Phi(S_\beta(t)U_0)$ is nonincreasing. In addition, if $\Phi(S_\beta(t)U_0)=\Phi(U_0)$ for any $t>0$, then from \eqref{4:13} and \eqref{3:1a}, we have
	\begin{eqnarray}
		\begin{aligned}
			0=\Phi(S(t)U_0)-\Phi(U_0)=& -\int_0^t\|A^{\nu/2}u_t\|_2^2d\tau -\int_0^t\|A^{1/2}\theta\|_2^2d\tau\leq 0,
		\end{aligned}\label{6:2:7}
	\end{eqnarray}
	thus
	\begin{eqnarray}
		\|A^{\nu/2}u_t(t)\|_2=\|A^{1/2} \theta (t)\|_2=0.
	\end{eqnarray}
	From \eqref{3:1a}, we have
	\begin{eqnarray}
		u_t(t)=\theta (t)=0,\quad t\geq 0\quad\text{so}\quad  u(t)=u_0,\quad t>0.
	\end{eqnarray}
	Therefore, $U_0=(u_0,0,0)\in\mathcal{N}_\beta$. This means that $\Phi$ is a strict  Lyapunov function for $(\pazocal{H},S_\beta(t))$ on $\pazocal{H}$. Using a density argument, it is possible to establish that the inequality \eqref{6:2:7} is valid for mild solutions on $[0,T]$ for all $T>0$.
\end{proof}

\begin{lemma}
	The Lyapunov function \eqref{6:2:6} satisfies the following conditions:
	\begin{description}
		\item[a)] $\Phi$ is bounded from above on any bounded subset of $\pazocal{H}$;
		\item[b)] The set $\Phi_R=\{W\in\pazocal{H}; \ \Phi(W)\leq R\}$ is bounded.
	\end{description}
\end{lemma}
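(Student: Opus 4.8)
The plan is to observe that $\Phi(W)$ is nothing but the modified energy $\mathcal{E}$ from \eqref{4:10} evaluated at an arbitrary point $W=(u,\varphi,\theta)\in\pazocal{H}$, and that the two pointwise bounds \eqref{4:11} and \eqref{4:11a} established in Lemma \ref{lem:4:1} never used that $W$ lies on a trajectory of $S_\beta(t)$; hence they are valid verbatim for every $W\in\pazocal{H}$ and yield exactly (a) and (b). For completeness I would reproduce the short estimates rather than merely quote the lemma.

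For part (a), fix $W=(u,\varphi,\theta)$ with $\|W\|_{\pazocal{H}}\le\rho$. From \eqref{3:5} together with the Sobolev embedding $V_1=H_0^1(\Omega)\hookrightarrow L^{p+2}(\Omega)$, which in dimension two holds for every finite $p\ge 0$, one gets $\int_\Omega F(u)\,dx\le C_F(|\Omega|+\|u\|_{p+2}^{p+2})\le C(1+\|A^{1/2}u\|_2^{p+2})$; and by Cauchy--Schwarz, Young's inequality and \eqref{3:1a}, $-(h,u)_2\le \kappa^2\|h\|_2^2+\|A^{1/2}u\|_2^2\le C(1+\|A^{1/2}u\|_2^2)$. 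Adding these to $\tfrac12\|W\|_{\pazocal{H}}^2$ and using $\|A^{1/2}u\|_2^2\le\|W\|_{\pazocal{H}}^2\le\|W\|_{\pazocal{H}}^{p+2}+1$ gives $\Phi(W)\le M(1+\|W\|_{\pazocal{H}}^{p+2})\le M(1+\rho^{p+2})<\infty$, which is precisely \eqref{4:11a} and proves (a).

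For part (b), integrating $F(s)\ge -m_F$ from \eqref{3:3} over $\Omega$ gives $\int_\Omega F(u)\,dx\ge -|\Omega|m_F$, while Young's inequality and \eqref{3:1a} give $-(h,u)_2\ge -\kappa^2\|h\|_2^2-\tfrac14\|A^{1/2}u\|_2^2$. Substituting into \eqref{6:2:6} we obtain $\Phi(W)\ge \tfrac14\|A^{1/2}u\|_2^2+\tfrac12\|\varphi\|_2^2+\tfrac12\|\theta\|_2^2-K\ge \tfrac14\|W\|_{\pazocal{H}}^2-K$ with $K=\kappa^2\|h\|_2^2+|\Omega|m_F$, i.e. \eqref{4:11}. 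Hence $W\in\Phi_R$ forces $\|W\|_{\pazocal{H}}^2\le 4(R+K)$, so $\Phi_R$ is bounded. There is no real obstacle in this argument; the only point that deserves a word is the Sobolev embedding used in (a), which since $\Omega\subset\mathds{R}^2$ is available for all finite exponents and therefore imposes no growth restriction on $p$.
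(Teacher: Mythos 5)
Your proof is correct and follows essentially the same route as the paper: both parts reduce to the pointwise estimates \eqref{4:11a} and \eqref{4:11} from Lemma \ref{lem:4:1}, which you rightly note are purely algebraic bounds on $\Phi(W)$ valid for any $W\in\pazocal{H}$, not just points on a trajectory. The paper simply cites those estimates while you rederive them, but the substance is identical.
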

\begin{proof}
	{\bf(a)} Since $(\pazocal{H},S_\beta(t))$ is gradient with Lyapunov function $\Phi=\mathcal{E}$, it follows from definition of $\mathcal{E}(t)$ that $\Phi$ is bounded from above on every bounded subset of $\pazocal{H}$.
	
	{\bf(b)} For every $W\in \Phi_R$, it follows from definition of $\Phi_R$ and $\Phi$, and from \eqref{4:11} that
	\begin{eqnarray}
		\|W\|^2_{\pazocal{H}}\leq 4\Phi(W)+K\leq 4R+K.
	\end{eqnarray}
	Therefore, $\Phi_R$ is a bounded set.
\end{proof}

\begin{proof}[{\bf Proof of Theorem \ref{thm:4:1}}]
	Since $(\pazocal{H},S_\beta(t))$ is gradient, asymptotically smooth, $\Phi$ is bounded above on any bounded subset of $\pazocal{H}$, $\Phi_R$ and $\mathcal{N}_{\beta}$ are  bounded set, the result follows from Theorem \ref{thm:ar:1:1}.
\end{proof}

\begin{remark}\label{rem:4:5}
	Consider the Lyapunov function $\Phi$ given in \eqref{6:2:6}. By using \eqref{4:11}, \eqref{4:11a} and Remark 7.5.8 in \cite{ISBN:9780387-877112}, we have
	\begin{eqnarray}
		\begin{aligned}
			\sup_{W\in\mathfrak{A}_{\beta}}\|W\|^2_{\pazocal{H}}\leq 4\bigg(\sup_{W\in\mathfrak{A}_{\beta}}\Phi(W)+K\bigg)\leq 4\bigg(\sup_{W\in\mathcal{N}_{\beta}}\Phi(W)+K\bigg)\leq 4M\sup_{W\in\mathcal{N}_{\beta}}\|W\|^{p+2}_{\pazocal{H}}\\
			+4(M+K).
		\end{aligned}
	\end{eqnarray}
	Since that $\mathcal{N}_{\beta}$ is a bounded set (see \eqref{6:2:3a}), we conclude that there exists a constant $C_0>0$, independent of $\nu$, such that 
	\begin{eqnarray}
		\sup_{W\in\mathfrak{A}_{\beta}}\|W\|^2_{\pazocal{H}}\leq C_0.
	\end{eqnarray}
	Therefore, by taking $R_0^2=1+C_0$ and considering $\mathcal{B}_0=\overline{B}(0,R_0)\subset\pazocal{H}$, the closed ball centered at the origin and radius $R_0$, we obtain a bounded absorbing set (independent of $\nu$) for $(\pazocal{H},S_\beta(t))$. Thus there exists (a time) $t_{\mathcal{B}_0}>0$ such that   $S_\beta(t)\mathcal{B}_0\subset\mathcal{B}_0$ for any $t\geq t_{\mathcal{B}_0}$. As a consequence, if we consider $\mathcal{B}=\cup_{t\geq t_{\mathcal{B}_0}}S(t)\mathcal{B}_0$, then $\mathcal{B}$ is a bounded positively invariant absorbing set (independent of $\nu\in(0,1/2)$) with $\mathcal{B}\subset\mathcal{B}_0$.
\end{remark}

\begin{theorem}
	The global attractor $\mathfrak{A}_\nu$ has finite fractal dimension.
\end{theorem}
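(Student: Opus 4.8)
The plan is to obtain finite fractal dimensionality as a direct consequence of the quasi-stability already proved, by invoking the abstract criterion of Chueshov and Lasiecka. Recall from Remark~\ref{rem:4:5} that $(\pazocal{H},S_\beta(t))$ admits a bounded, positively invariant, absorbing set $\mathcal{B}\subset\pazocal{H}$ with $\mathcal{B}\subset\mathcal{B}_0=\overline{B}(0,R_0)$. Since $\mathcal{B}$ absorbs every bounded subset of $\pazocal{H}$, the global attractor $\mathfrak{A}_\beta$ furnished by Theorem~\ref{thm:4:1} satisfies $\mathfrak{A}_\beta\subset\mathcal{B}$; moreover $\mathfrak{A}_\beta$ is compact and fully invariant, hence in particular a bounded positively invariant set. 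Therefore Theorem~\ref{thm:6:4} applies with $\pazocal{O}=\mathfrak{A}_\beta$: there exist nonnegative functions $a,b,c$ satisfying Definition~\ref{ar:def:1:1}{\bf (a)-(b)} and a compact seminorm $\mu$ on $X=V_1$ for which the stabilizability estimate \eqref{6:5a} holds along all pairs of trajectories issuing from $\mathfrak{A}_\beta$.

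First I would check that $\mu(z)=\|z\|_6$ is genuinely a compact seminorm on $X=V_1$: in dimension two the embedding $V_1=H_0^1(\Omega)\hookrightarrow L^6(\Omega)$ is compact by the Rellich--Kondrachov theorem, which is exactly the compactness required in Definition~\ref{ar:def:1:1}. Then, observing that $b(t)=\frac{\gamma_2}{\gamma_1}e^{-\gamma_3 t/\gamma_2}\to 0$ as $t\to\infty$, I would fix a single time $t^{\ast}>0$ with $b(t^{\ast})<1$; combined with $c(t^{\ast})<\infty$ and $a(t^{\ast})<\infty$, this places us exactly in the hypotheses of the abstract theorem asserting that a system which is quasi-stable on its compact global attractor has an attractor of finite fractal dimension (see, e.g., \cite{doi:10.1007/978-3-319-22903-4,doi:10.1007/s10884-004-4289-x,ISBN:9780387-877112}). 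Applying it to $(\pazocal{H},S_\beta(t))$ yields that $\mathfrak{A}_\beta$ has finite fractal dimension in $\pazocal{H}$. By Remark~\ref{rem:4:3} the constants $\gamma_1,\gamma_2,\gamma_3,C_{\pazocal{O}}$ are independent of $\beta$, so the resulting bound on the dimension is in fact uniform over $\beta\in\Lambda$.

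Since the hard analytic step — the stabilizability estimate \eqref{6:5a} — has already been carried out in Theorem~\ref{thm:6:4}, no serious obstacle remains. The only point calling for attention is verifying that the seminorm appearing in \eqref{6:5a} is compact \emph{with respect to the space $X$} entering the abstract framework, so that the entropy/covering argument behind the dimension estimate is legitimate; this, together with the uniformity of the constants, is immediate from the $2$D Sobolev embedding $H_0^1(\Omega)\hookrightarrow L^6(\Omega)$ and Remark~\ref{rem:4:3}. The same quasi-stability property would in addition yield Hölder regularity of $\mathfrak{A}_\beta$ and the existence of an exponential attractor, but these are not needed for the present statement.
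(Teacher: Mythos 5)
Your proposal is correct and follows essentially the same route as the paper: quasi-stability on the bounded, invariant attractor (Theorem \ref{thm:6:4} applied with $\pazocal{O}=\mathfrak{A}_\beta$) combined with the abstract dimension result, Theorem \ref{thm:ar:1:3}. The extra verifications you include (compactness of the seminorm $\|\cdot\|_6$ on $V_1$ via the two-dimensional Sobolev embedding, and the uniformity of the constants in $\beta$) are consistent with what the paper already established in Theorem \ref{thm:6:4} and Remark \ref{rem:4:3}.
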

\begin{proof}
	Since the attractor $\mathfrak{A}_\nu$ is bounded and invariant, from Theorem \ref{thm:6:4},  $(\pazocal{H},S_\beta(t))$ is quasi-stable on $\mathfrak{A}_\nu$, therefore, follows from the Theorem \ref{thm:ar:1:3} that $\mathfrak{A}_\nu$ has a finite fractal dimension.
\end{proof}

\section{Regularity of trajetories  and exponential attractors}
\begin{theorem}\label{thm:5:1a}
	Every complete trajectory $\gamma=\{(u(t), u_t(t),\theta(t)): t\in\mathds{R}\}$
	in $\mathfrak{A}$ has further regularity
	\begin{eqnarray}
		\|u(t)\|_{V_2}+\|u_t(t)\|_{V_1}+\|\theta(t)\|_{V_2}\leq
		R,\quad\forall t\in\mathds{R},\label{8:1}
	\end{eqnarray}
	for some $R>0$. Therefore, $\mathfrak{A}$ is uniformly bounded in $\pazocal{H}_1:=V_2\times V_1\times V_2$.
\end{theorem}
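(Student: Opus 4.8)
The plan is to establish \eqref{8:1} in two stages: first upgrade the regularity in time of a complete trajectory to Lipschitz (so that $u_t\in L^\infty(\mathds{R};V_1)$ and $u_{tt},\theta_t\in L^\infty(\mathds{R};V_0)$), exploiting the stabilizability estimate \eqref{6:5a} together with the invariance of $\mathfrak{A}$; then convert this into spatial $V_2$–regularity by solving each equation of \eqref{p:1:1} for its leading elliptic term.

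\textbf{Stage 1 (Lipschitz in time).} I would fix a complete trajectory $\gamma(t)=(u(t),u_t(t),\theta(t))\in\mathfrak{A}$, $t\in\mathds{R}$. By Theorem~\ref{thm:4:1} the set $\mathfrak{A}$ is compact, invariant, hence a bounded positively invariant set, so $R_{\mathfrak{A}}:=\sup_{t}\|\gamma(t)\|_{\pazocal{H}}<\infty$. For $h>0$ I would apply \eqref{6:5a} with $\pazocal{O}=\mathfrak{A}$ to the pair of mild solutions $S_\beta(\cdot)\gamma(\tau+h)$, $S_\beta(\cdot)\gamma(\tau)$ (admissible by the density remark closing the proof of Theorem~\ref{thm:6:4}); reparametrising $t\mapsto t-\tau$ this yields, for $\tau\le t$,
\[
\|\gamma(t+h)-\gamma(t)\|_{\pazocal{H}}^2\le b(t-\tau)\,\|\gamma(\tau+h)-\gamma(\tau)\|_{\pazocal{H}}^2+c(t-\tau)\sup_{\tau\le s\le t}\|u(s+h)-u(s)\|_6^2 .
\]
Since $b(t-\tau)\to 0$ as $\tau\to-\infty$, $\|\gamma(\tau+h)-\gamma(\tau)\|_{\pazocal{H}}\le 2R_{\mathfrak{A}}$, and $c(\cdot)$ is bounded (see \eqref{6:23:1}), letting $\tau\to-\infty$ and taking $\sup_t$ gives $\Theta(h)^2\le c_\infty\sup_s\|u(s+h)-u(s)\|_6^2$, where $\Theta(h):=\sup_t\|\gamma(t+h)-\gamma(t)\|_{\pazocal{H}}$. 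This looks circular because $\|u(s+h)-u(s)\|_6\le C\|A^{1/2}(u(s+h)-u(s))\|_2\le C\Theta(h)$; the key is to break it with the two–dimensional interpolation inequality $\|w\|_6\le C\|A^{1/2}w\|_2^{2/3}\|w\|_2^{1/3}$ and the elementary bound $\|u(s+h)-u(s)\|_2\le R_{\mathfrak{A}}|h|$ (from \eqref{6:5} and $\|u_t\|_{V_0}\le R_{\mathfrak{A}}$), which give $\Theta(h)^2\le c_\infty C^2\Theta(h)^{4/3}(R_{\mathfrak{A}}|h|)^{2/3}$, hence $\Theta(h)\le C'|h|$. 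Therefore $\gamma:\mathds{R}\to\pazocal{H}$ is globally Lipschitz; as $\pazocal{H}$ is Hilbert, $\gamma$ is a.e. differentiable with $\gamma'\in L^\infty(\mathds{R};\pazocal{H})$, and (by continuity of $\gamma$ in $\pazocal{H}$ and weak lower semicontinuity of the stronger norms) there is $R_1<\infty$ with $\|u_t(t)\|_{V_1}+\|u_{tt}(t)\|_{V_0}+\|\theta_t(t)\|_{V_0}\le R_1$ for every $t$.

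\textbf{Stage 2 (elliptic bootstrap).} Reading $\eqref{p:1:1}_2$ pointwise, $A\theta(t)=-\theta_t(t)-\delta A^\sigma u_t(t)$; since $\theta_t(t)\in V_0$ and, by \eqref{3:1a}, $A^\sigma u_t(t)\in V_0$ with $\|A^\sigma u_t(t)\|_2\le\kappa\|A^{1/2}u_t(t)\|_2\le\kappa R_1$, elliptic regularity for $A$ gives $\theta(t)\in V_2$ with $\|\theta(t)\|_{V_2}=\|A\theta(t)\|_2\le C$. Reading $\eqref{p:1:1}_1$ as $Au(t)=-u_{tt}(t)-A^\nu u_t(t)+\delta A^\sigma\theta(t)-f(u(t))+h$, every term on the right lies in $V_0$ with norm controlled by $R_{\mathfrak{A}},R_1$: $u_{tt}(t)\in V_0$; $A^\nu u_t(t)\in V_0$ (again \eqref{3:1a}); $A^\sigma\theta(t)\in V_0$ since $\theta(t)\in D(A)$; $h\in V_0$ (Assumption~\ref{asp:3:1}); and $f(u(t))\in V_0$ because $u(t)\in V_1\hookrightarrow L^{2(p+1)}(\Omega)$ in dimension two while $|f(s)|\le 2C_f(1+|s|^{p+1})$ by \eqref{3:4}. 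Hence $u(t)\in V_2$ with $\|u(t)\|_{V_2}\le C$ uniformly in $t$, which together with $\|u_t(t)\|_{V_1}\le R_1$ gives \eqref{8:1}; since $\mathfrak{A}$ is the union of its complete trajectories, $\mathfrak{A}$ is bounded in $\pazocal{H}_1=V_2\times V_1\times V_2$.

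\textbf{Where the difficulty lies.} The hard part is Stage~1 — extracting a genuine Lipschitz-in-time bound from the quasi-stability inequality — which needs both the invariance of $\mathfrak{A}$ (to send the initial time to $-\infty$ and annihilate the $b(t)$–term) and the interpolation trick that turns the circular estimate $\Theta(h)^2\le c_\infty C^2\Theta(h)^2$ into $\Theta(h)^2\le c_\infty C^2\Theta(h)^{4/3}|h|^{2/3}$. A secondary subtlety is that the one–pass bootstrap of Stage~2 places $A^\nu u_t(t)$ and $A^\sigma u_t(t)$ in $V_0$ only for $\nu,\sigma\le 1/2$; outside that range one should first run the Stage~1 argument on the time–differentiated system (whose forcing $-f'(u)u_t$ is bounded in $V_0$ along $\mathfrak{A}$ by \eqref{3:2} and the $2$D Sobolev embeddings) to gain extra regularity of $u_t$, and iterate — consistently with the range $\nu,\sigma\in(0,1/2)$ appearing elsewhere in the paper.
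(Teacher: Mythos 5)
Your proof is correct and follows essentially the same route as the paper: the paper obtains your Stage 1 conclusions ($u_t\in L^\infty(\mathds{R};V_1)$, $u_{tt},\theta_t\in L^\infty(\mathds{R};V_0)$) by directly invoking the abstract quasi-stability regularity result (Theorem \ref{thm:ar:1:3a}, using that $c(t)$ in \eqref{6:23:1} is bounded), whose standard proof is precisely your Lipschitz-in-time argument via invariance and interpolation, and then performs the same elliptic bootstrap as your Stage 2. Your closing observation that the bootstrap needs $\nu,\sigma\le 1/2$ to place $A^\nu u_t$ and $A^\sigma u_t$ in $V_0$ is a genuine point that the paper passes over silently.
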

\begin{proof}
	Since the dynamical system $(\pazocal{H},S(t))$ is quasi-stable on bounded subset of $\pazocal{H}$, $(\pazocal{H},S(t))$ is quasi-stable on the attractor
	$\mathfrak{A}$. Note that $\sup_{t\in\mathds{R}_+}c(t)<\infty$ in \eqref{6:23:1}, therefore follows from Theorem \ref{thm:ar:1:3a} that any
	full trajectory $\gamma=\{(u(t),u_t(t),\theta(t)):t\in\mathds{R}\}$ belongs to $\mathfrak{A}_\nu$
	enjoys the following regularity properties
	\begin{eqnarray}
		u_t\in L^\infty(\mathbb{R},V_1)\cap C(\mathbb{R},V_0),\ \ u_{tt}\in L^\infty(\mathbb{R},V_0)\quad\text{and} \quad \theta_t\in L^\infty(\mathbb{R},V_1).\label{8:2}
	\end{eqnarray}
	From $\eqref{p:1:1}_2$ and \eqref{8:2}, we have
	\begin{eqnarray}
		A\theta=-\theta_t-\delta A^\sigma u_t\in L^\infty (\mathds{R},V_0),\label{8:3}
	\end{eqnarray}
	therefore $\theta\in L^\infty(\mathds{R};V_2)$. From $\eqref{p:1:1}_1$, \eqref{8:2}  and the fact that nonlinear term $f$ is locally Lipschitz, we obtain
	\begin{eqnarray}
		Au=h-u_{tt}-A^\nu u_t+\delta A^\sigma\theta -f(u)\in L^\infty(\mathds{R};V_0),
	\end{eqnarray}
	consequently $u\in L^\infty(\mathds{R},V_2)$. Thus, \eqref{8:1} holds.
\end{proof}

\begin{theorem}
	The dynamical system $(\pazocal{H},S_\beta(t))$ possesses a generalized exponential attractor $\mathfrak{A}_\nu^{exp}\subset\pazocal{H}$, with finite fractal dimension in extended space
	\begin{eqnarray}
		\pazocal{H}_{-1}=H^{-1}(\Omega)\times L^2(\Omega)\times H^{-1}(\Omega)\supset \pazocal{H}.
	\end{eqnarray}
\end{theorem}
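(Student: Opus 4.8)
The plan is to obtain $\mathfrak{A}_\beta^{exp}$ from the abstract construction of generalized exponential attractors for quasi-stable systems \cite{doi:10.1007/978-3-319-22903-4,isbn:9780821866535}. Two of the three ingredients are already available. By Remark~\ref{rem:4:5}, $(\pazocal{H},S_\beta(t))$ possesses a bounded, closed, positively invariant absorbing set $\mathcal{B}$, with bounds uniform in $\beta$ (Remark~\ref{rem:4:3}). By Theorem~\ref{thm:6:4}, the system is quasi-stable on $\mathcal{B}$; choosing $t^\ast>0$ large enough in \eqref{6:5a} so that $b(t^\ast)<1$, the discrete map $S_\beta(t^\ast)$ is quasi-stable on $\mathcal{B}$ with contraction coefficient $<1$, and the standard inductive construction produces a compact set $\mathfrak{A}_\beta^{d}\subset\mathcal{B}$ that is a finite-dimensional (in $\pazocal{H}$) exponential attractor for the discrete semigroup $\{S_\beta(n t^\ast)\}_{n\ge 0}$. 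The continuous exponential attractor is then $\mathfrak{A}_\beta^{exp}:=\bigcup_{t\in[0,t^\ast]}S_\beta(t)\mathfrak{A}_\beta^{d}$. The third, and only missing, ingredient is a uniform modulus of continuity in $t$ of the flow on $\mathcal{B}$ measured in the weaker norm of $\pazocal{H}_{-1}$; this is what transfers finite-dimensionality from the discrete to the continuous system and, since no better continuity in time is available for the hyperbolic component, forces the dimension to be finite only in the enlarged space $\pazocal{H}_{-1}$ rather than in $\pazocal{H}$.

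So the heart of the proof is to show that there are $\gamma\in(0,1]$ and $C_{\mathcal{B},T}>0$ with
\[
	\|S_\beta(t)U_0-S_\beta(\tau)U_0\|_{\pazocal{H}_{-1}}\le C_{\mathcal{B},T}\,|t-\tau|^{\gamma},\qquad U_0\in\mathcal{B},\ \ 0\le\tau\le t\le T,
\]
the Lipschitz dependence on initial data in $\pazocal{H}_{-1}$ being immediate from \eqref{5:13} together with $\|\cdot\|_{\pazocal{H}_{-1}}\le C\|\cdot\|_{\pazocal{H}}$. For a strong solution $U=(u,u_t,\theta)=S_\beta(\cdot)U_0$ with data in $\mathcal{B}$ I would write
\[
	S_\beta(t)U_0-S_\beta(\tau)U_0=\int_\tau^t U_t(s)\,ds=\int_\tau^t\big(u_t(s),\,u_{tt}(s),\,\theta_t(s)\big)\,ds
\]
and estimate each slot using \eqref{p:1:1}, \eqref{3:1a} and the a priori bound $\|u(s)\|_{V_1}+\|u_t(s)\|_{V_0}+\|\theta(s)\|_{V_0}\le R_0$ valid on $\mathcal{B}$. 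The displacement slot is controlled by $u_t\in L^\infty(0,T;V_0)$, so $u$ is Lipschitz with values in $H^{-1}$. The velocity slot is handled through $u_{tt}=h-Au-A^\nu u_t+\delta A^\sigma\theta-f(u)$: the linear terms are bounded on $\mathcal{B}$ in a negative Sobolev norm (for $\nu,\sigma\in(0,1/2)$ one keeps $A^\nu u_t$ and $A^\sigma\theta$ in $H^{-1}$ by \eqref{3:1a}), while $\|f(u)\|_{2}\le C(R_0)$ follows from \eqref{3:4} and the two-dimensional embedding $V_1\hookrightarrow L^{2p+2}(\Omega)$; hence $\int_\tau^t u_{tt}\,ds$ satisfies the desired Hölder bound in the $\pazocal{H}_{-1}$-norm of that slot. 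For the temperature slot I would exploit the parabolic character of $\eqref{p:1:1}_2$: from $\theta(t)=e^{-At}\theta_0-\delta\int_0^t e^{-A(t-s)}A^\sigma u_t(s)\,ds$, the analyticity of $\{e^{-At}\}$ on $H^{-1}$ together with $\theta_0\in V_0$ and $A^\sigma u_t\in L^\infty(0,T;H^{-1})$ yields $\theta$ Hölder continuous with values in $H^{-1}$, uniformly for $U_0\in\mathcal{B}$. A density argument (every mild solution is a uniform limit of strong ones on compact time intervals) extends all of this to arbitrary data in $\mathcal{B}$.

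With the absorbing set $\mathcal{B}$, the stabilizability estimate \eqref{6:5a} and the above time-Hölder estimate in place, the abstract theory \cite{doi:10.1007/978-3-319-22903-4,isbn:9780821866535} gives a set $\mathfrak{A}_\beta^{exp}\subset\mathcal{B}\subset\pazocal{H}$ that is compact in $\pazocal{H}$, positively invariant, contains the global attractor $\mathfrak{A}_\beta$, attracts every bounded subset of $\pazocal{H}$ at an exponential rate, and has finite fractal dimension in $\pazocal{H}_{-1}$; by Remark~\ref{rem:4:3} all the constants can be chosen independent of $\beta$. I expect the main obstacle to be precisely the time-Hölder estimate on $\mathcal{B}$: since the first equation of \eqref{p:1:1} is hyperbolic there is no regularizing effect for $(u,u_t)$, so one must lean on the equation itself to place $u_{tt}$ in a space one derivative below $L^2$ — this controlled loss being exactly the reason the enlarged space $\pazocal{H}_{-1}$ appears — while the required $H^{-1}$-in-time continuity of $\theta$ has to be squeezed out of the analyticity of the heat semigroup rather than from the merely $L^2$ regularity of $\theta_0$ available on $\mathcal{B}$.
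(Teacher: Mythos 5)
Your proposal follows essentially the same route as the paper's proof: take the bounded positively invariant absorbing set $\mathcal{B}$ of Remark~\ref{rem:4:5}, use quasi-stability on $\mathcal{B}$ from Theorem~\ref{thm:6:4}, establish time-Hölder (in fact Lipschitz) continuity of $t\mapsto S_\beta(t)U_0$ in the extended space $\pazocal{H}_{-1}$ by writing the difference as $\int_{\tau}^{t}U_t(s)\,ds$ and bounding $U_t$ in $\pazocal{H}_{-1}$ through the equation, and then invoke Theorem~\ref{thm:ar:1:4}. The only deviation is that you handle the temperature component via the Duhamel formula and analyticity of $e^{-At}$, whereas the paper simply bounds $\|\pazocal{A}U(t)\|+\|\pazocal{F}(U(t))\|$ on $\mathcal{B}$ — a more careful rendering of the same step rather than a different argument.
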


\begin{proof}
	Consider the bounded positively invariant absorbing set $\mathcal{B}$ given in Remark \ref{rem:4:5}, therefore $(\pazocal{H},S_\beta(t))$ is quasi-stable on $\mathcal{B}$. Moreover, given $U_0\in\mathcal{B}$ such that $U(t)=S_\beta(t)U_0$ is a strong solution of \eqref{4:4}, then there exists a positive constant $C_{\mathcal{B}}$ such that for any $T$, we have
	\begin{eqnarray}
		\|S_\beta(t)U_0\|_{\pazocal{H}}\leq C_{\mathcal{B}},\quad \forall t\in[0,T].
	\end{eqnarray}
	Since $\pazocal{F}$ is locally Lipschitz  and $\mathcal{B}$ is positively invariant, from \eqref{4:4} we obtain
	\begin{eqnarray}
		\|U_t(t)\|_{\pazocal{H}_{-1}}\leq \|\pazocal{A}U(t)\|_{\pazocal{H}}+\|\pazocal{F}(U(t))\|_{\pazocal{H}}\leq C_{\mathcal{B}T},\quad \forall t\in[0,T],
	\end{eqnarray}
	where $C_{\mathcal{B}T}>0$ is a constant independent of $\nu\in(0,1/2)$. Then we have
	\begin{eqnarray}
		\|S_\beta(t_1)U_0-S_\beta(t_2)U_0\|_{\pazocal{H}_{-1}}\leq \int_{t_1}^{t_2}\|U_t(s)\|_{\pazocal{H}_{-1}}ds\leq C_{\mathcal{B}T}|t_1-t_2|,\quad 0\leq t_1<t_2\leq T.\label{6:3}
	\end{eqnarray}
	This means that $t\mapsto S_\beta(t)U_0$ is Hölder continuous in extended space $\pazocal{H}_{-1}$. By using a density argument, \eqref{6:3} is valid for mild solutions. Therefore, the existence of exponential attractor (with finite fractal dimensional in $\pazocal{H}_{-1}$) follows from Theorem \ref{thm:ar:1:4}.
\end{proof}

\section{Upper semicontinuity}
In Theorem \ref{thm:4:1} we prove that for any $\sigma,\nu\in[0,1]$ there exists a compact global attractor $\mathfrak{A}_\beta$, $\beta=(\nu,\sigma)$. In this section, we prove that the family $\{\mathfrak{A}_\beta\}_{\beta\in\Lambda}$, $\Lambda=[0,1]\times [0,1]$ is upper-semicontinuous at $\beta_0=(0,0)$.

\begin{theorem}
	The family of global attractors $\{\mathfrak{A}_\beta\}_{\beta\in\Lambda}$ is upper-semicontinuous at $\beta_0=(0,0)$, that is 
	\begin{eqnarray}
		\lim_{\beta\to 0}d_{\pazocal{H}}\{\mathfrak{A}_\beta|\mathfrak{A}_{\beta_0}\}=0.\label{9:1}
	\end{eqnarray}
\end{theorem}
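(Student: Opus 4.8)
The plan is to follow the classical scheme for upper semicontinuity: (i) produce a bounded subset of $\pazocal{H}$ that contains every $\mathfrak{A}_\beta$ for $\beta$ near $\beta_0$; (ii) show that $S_\beta(t)\to S_{\beta_0}(t)$ uniformly on that set and on compact time intervals; and (iii) combine this with the invariance of $\mathfrak{A}_\beta$ and the attracting property of $\mathfrak{A}_{\beta_0}$. For step (i) I would invoke Remark~\ref{rem:4:5}, which already gives $\mathfrak{A}_\beta\subset\mathcal{B}_0$ for all $\beta$, together with the regularity Theorem~\ref{thm:5:1a}: since the quasi-stability constants are $\beta$-independent (Remark~\ref{rem:4:3}), the bound \eqref{8:1} holds with one radius $R>0$ for all $\beta$ close to $\beta_0$, so all these attractors lie in $\mathcal{B}_1:=\{W\in\pazocal{H}_1:\|W\|_{\pazocal{H}_1}\le R\}$, a bounded subset of $\pazocal{H}$ along whose trajectories $u_t\in V_1$ and $\theta\in V_2$.

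For step (ii) I would prove that for each $T>0$,
\[
\lim_{\beta\to\beta_0}\ \sup_{U_0\in\mathcal{B}_1}\ \sup_{0\le t\le T}\ \|S_\beta(t)U_0-S_{\beta_0}(t)U_0\|_{\pazocal{H}}=0 .
\]
Writing $S_\beta(t)U_0=(u^\beta,u^\beta_t,\theta^\beta)$, $S_{\beta_0}(t)U_0=(u,u_t,\theta)$, subtracting \eqref{1:1} from \eqref{p:1:1}, and using $A^\nu u^\beta_t-u_t=A^\nu w_t+(A^\nu-I)u_t$ (and similarly for the $A^\sigma$ terms), the difference $W=(w,w_t,\psi)$, $w=u^\beta-u$, $\psi=\theta^\beta-\theta$, solves
\[
\begin{aligned}
&w_{tt}+Aw+A^\nu w_t-\delta A^\sigma\psi+f(u^\beta)-f(u)=-(A^\nu-I)u_t+\delta(A^\sigma-I)\theta=:R_1^\beta,\\
&\psi_t+A\psi+\delta A^\sigma w_t=-\delta(A^\sigma-I)u_t=:R_2^\beta,
\end{aligned}
\]
with $W(0)=0$. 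Testing the first equation with $w_t$, the second with $\psi$ and adding, the $A^{\sigma/2}$ cross-terms cancel; dropping the nonnegative dissipation terms, estimating $\|f(u^\beta)-f(u)\|_2\le C\|w\|_6\le C\|W\|_{\pazocal{H}}$ as in Lemma~\ref{lem:5:4}, and using Young's inequality I get
\[
\tfrac{d}{dt}\mathcal{E}_W\le C\,\mathcal{E}_W+\|R_1^\beta\|_2^2+\|R_2^\beta\|_2^2,\qquad \mathcal{E}_W:=\tfrac12\|W\|_{\pazocal{H}}^2 ,
\]
so Grönwall gives $\|S_\beta(t)U_0-S_{\beta_0}(t)U_0\|_{\pazocal{H}}^2\le 2e^{CT}\int_0^T(\|R_1^\beta\|_2^2+\|R_2^\beta\|_2^2)\,ds$. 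The remaining point — that the right-hand side goes to $0$ uniformly in $U_0\in\mathcal{B}_1$ — is exactly where step (i) enters: $u_t(s)\in V_1$, $\theta(s)\in V_2$ with bounds uniform in $s\in[0,T]$ and $U_0\in\mathcal{B}_1$, and the elementary spectral estimate
\[
\sup\bigl\{\|(A^\gamma-I)g\|_2:\ \|A^{1/2}g\|_2\le M\bigr\}\longrightarrow 0\quad\text{as }\gamma\to0^+
\]
(split the eigenfunction series at a large index $N$: the tail is bounded by $\lambda_N^{2\gamma-1}M^2$, the head by $\max_{k\le N}(\lambda_k^\gamma-1)^2$, both small) yield $\|R_1^\beta(s)\|_2+\|R_2^\beta(s)\|_2\to0$ uniformly. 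A density argument carries the estimate over to mild solutions.

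For step (iii), fix $\varepsilon>0$. Since $\mathcal{B}_1$ is bounded in $\pazocal{H}$ and $\mathfrak{A}_{\beta_0}$ is the global attractor of $(\pazocal{H},S_{\beta_0}(t))$, there is $T_\varepsilon>0$ with $d_{\pazocal{H}}\{S_{\beta_0}(T_\varepsilon)\mathcal{B}_1\,|\,\mathfrak{A}_{\beta_0}\}<\varepsilon/2$; by step (ii) there is $\delta_\varepsilon>0$ with $\sup_{U_0\in\mathcal{B}_1}\|S_\beta(T_\varepsilon)U_0-S_{\beta_0}(T_\varepsilon)U_0\|_{\pazocal{H}}<\varepsilon/2$ for $|\beta-\beta_0|<\delta_\varepsilon$. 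For such $\beta$ (and $\beta$ close enough that $\mathfrak{A}_\beta\subset\mathcal{B}_1$) and any $x\in\mathfrak{A}_\beta$, invariance gives $x=S_\beta(T_\varepsilon)x'$ with $x'\in\mathfrak{A}_\beta\subset\mathcal{B}_1$, whence
\[
\mathrm{dist}_{\pazocal{H}}(x,\mathfrak{A}_{\beta_0})\le\|S_\beta(T_\varepsilon)x'-S_{\beta_0}(T_\varepsilon)x'\|_{\pazocal{H}}+\mathrm{dist}_{\pazocal{H}}(S_{\beta_0}(T_\varepsilon)x',\mathfrak{A}_{\beta_0})<\varepsilon ,
\]
and taking the supremum over $x\in\mathfrak{A}_\beta$ gives $d_{\pazocal{H}}\{\mathfrak{A}_\beta\,|\,\mathfrak{A}_{\beta_0}\}\le\varepsilon$ whenever $|\beta-\beta_0|<\delta_\varepsilon$, i.e.\ \eqref{9:1}.

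I expect the only real difficulty to be step (ii), and within it the singular perturbation terms $R_1^\beta,R_2^\beta$: $A^\nu-I$ and $A^\sigma-I$ are small only on functions with a little extra regularity, so the comparison has to be run on the $\pazocal{H}_1$-bounded set $\mathcal{B}_1$ — with a $\beta$-independent radius, which is precisely what Theorem~\ref{thm:5:1a} together with Remark~\ref{rem:4:3} provides — rather than on the mere $\pazocal{H}$-absorbing ball $\mathcal{B}_0$. The rest is routine energy estimates and the soft argument of step (iii).
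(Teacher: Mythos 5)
Your argument is correct in outline but takes a genuinely different route from the paper. The paper argues by compactness: it picks $U_0^n\in\mathfrak{A}_{\beta_n}$, uses the $\beta$-uniform bounds of Theorem \ref{thm:5:1a} and Theorem \ref{thm:ar:1:3a} to extract, via Arzel\`a--Ascoli and a diagonal argument, a subsequence of full trajectories converging in $C(I;\pazocal{H})$ for every compact $I$, and then passes to the limit in the weak formulation \eqref{9:7} (the delicate terms $(A^{\nu_k}u_t^{n_k},\varphi)_2$, etc., being handled as in \eqref{9:11} by citing earlier work) to conclude that the limit is a bounded full trajectory of the limit system, hence lies in $\mathfrak{A}_{\beta_0}$. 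You instead prove quantitative convergence $S_\beta(t)\to S_{\beta_0}(t)$ uniformly on a regular bounded set and on $[0,T]$, and close with the standard triangle-inequality argument; this is more work per step but yields, in principle, a rate, and avoids the subsequence extraction. Both schemes rest on the same two pillars: the $\beta$-uniform $\pazocal{H}_1$-bound on the attractors and the smallness of $A^\gamma-I$ on $V_1$-bounded sets as $\gamma\to 0^+$ (your spectral splitting is the explicit version of what the paper outsources to \eqref{9:11}).

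One step of yours needs repair. Your residuals $R_1^\beta=-(A^\nu-I)u_t+\delta(A^\sigma-I)\theta$ and $R_2^\beta=-\delta(A^\sigma-I)u_t$ are evaluated along the \emph{limit} trajectory $S_{\beta_0}(t)x'$ with $x'\in\mathcal{B}_1$, so to apply your spectral estimate you need $u_t(s)$ and $\theta(s)$ bounded in $V_1$ uniformly for $s\in[0,T]$ and $x'\in\mathcal{B}_1$. Theorem \ref{thm:5:1a} gives such bounds only for complete trajectories \emph{inside} the attractor, not for the forward orbit $S_{\beta_0}(t)\mathcal{B}_1$; you would have to prove separately that $S_{\beta_0}(t)$ propagates $\pazocal{H}_1$-regularity with bounds uniform on compact time intervals (a standard but unproved a priori estimate for strong solutions). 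The cleaner fix is to symmetrize the decomposition the other way, writing $A^\nu u^\beta_t-u_t=(A^\nu-I)u^\beta_t+w_t$ and $A^\sigma\theta^\beta-\theta=(A^\sigma-I)\theta^\beta+\psi$, so that the residuals sit on the $\beta$-trajectory, which lies in $\mathfrak{A}_\beta$ for all time and therefore enjoys the uniform $V_1$-bounds on $u^\beta_t$ and $\theta^\beta$ directly from Theorem \ref{thm:5:1a}, Theorem \ref{thm:ar:1:3a} and Remark \ref{rem:4:3}; the cross terms still cancel and the Gr\"onwall step goes through unchanged. With that modification your proof is complete.
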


\begin{proof}
	Let $U_0^n\in\mathfrak{A}_{\beta_n}$ where $\beta_n=(\nu_n,\sigma_n)$, with $\nu_n\to 0^+$ and $\sigma_n\to 0^+$. Let us to show that there exists a subsequence, still represented by $(U^n_0)$, such that $U_0^n\to U_0\in\mathfrak{A}_{\beta_0}$ in $\pazocal{H}$. Since $U^n_0\in\mathfrak{A}_{\beta_n}$ there exists a full tragectory $U^n(t)=(u^n(t),u_t^n(t),\theta(t))$ such that $U^n(0)=U^n_0$. Consider the bounded positively invariant absorbing set $\mathcal{B}$ defined in Remark \ref{rem:4:5}. Therefore,
	\begin{eqnarray}
		\|U^n(t)\|^2_{\pazocal{H}}=\|(u^n(t),u^n_t(t),\theta(t))\|^2_{\pazocal{H}}\leq R_0^2,\quad\forall t,n.\label{9:2}
	\end{eqnarray}
	From Theorem \ref{thm:5:1a} and Theorem \ref{thm:ar:1:3a} there exist $R_1,R_2>0$, such that 
	\begin{eqnarray}
		\|u^n_t(t)\|^2_{V_1}+\|u^n_{tt}(t)\|^2_2+\|\theta^n_t(t)\|^2_{V_1}\leq R_1^2,\quad \forall t,n\label{9:3}
	\end{eqnarray} 
	and
	\begin{eqnarray}
		\|u^n(t)\|^2_{V_2}+\|\theta^n(t)\|^2_{V_2}\leq R_2^2,\quad \forall t,n.\label{9:4}
	\end{eqnarray} 
	For every compact interval $I\subset\mathds{R}$, by using \eqref{9:3}, \eqref{9:4} and the Arzelá-Ascoli Theorem, we obtain $U\in C(I;\pazocal{H})$ such that $U^{n_k}\to U$ in $C(I;\pazocal{H})$ for some subsequence $(U^{n_k})$ of $(U^n)$. Being $I$ arbitrary, by using a diagonal argument we can obtain $U\in C(\mathds{R};\pazocal{H})$ such that, for any compact interval $I\subset\mathds{R}$, we have 
	\begin{eqnarray}
		U^{n_k}\to U=(u,u_t,\theta)\quad\text{in}\quad C(I;\pazocal{H}),\label{9:5}
	\end{eqnarray}
	and from \eqref{9:2}, we obtain
	\begin{eqnarray}
		\sup_{t\in\mathds{R}}\|U(t)\|_{\pazocal{H}}\leq R_0.
	\end{eqnarray}
	Now, let us to prove that $U=(u,u_t,\theta)$ is weak solution to
	\begin{eqnarray}
		\begin{aligned}
			u_{tt}-A u+u_t+\delta \theta
			+f(u)=&\ h,\quad\text{in} \ \Omega\times(0,\infty),\\
			\theta_t- A\theta+\delta u_{t}=&\ 0,\quad\, \text{in}\ \Omega\times(0,\infty).\label{9:6}
		\end{aligned}
	\end{eqnarray}
	Note that, from definition of weak solution to \eqref{p:1:1}-\eqref{p:1:3}, the functions $U^{n_k}=(u^{n_k},u^{n_k}_t,\theta^{n_k})$ satisfies the following identity (in the sense of distributions)
	\begin{eqnarray}
		\begin{aligned}
			\frac{d}{dt}(u^{n_k}_t,\varphi)_2+(A^{1/2}u^{n_k},A^{1/2}\varphi)_2+(A^{\nu_k} u^{n_k}_t,\varphi
			)_2-\delta(A^{\sigma_k}\theta^{n_k},\varphi)_2+\\
			(f(u^{n_k}),\varphi)_2=(h,\varphi)_2,\\
			\frac{d}{dt}(\theta^{n_k},\phi)_2+(A^{1/2}\theta^{n_k},A^{1/2}\phi)_2+\delta (A^{\sigma_k} u^{n_k}_t,\phi)_2=0,
		\end{aligned}\label{9:7}
	\end{eqnarray}
	for all $\varphi,\phi\in V_1$. From \eqref{9:5}, we have
	\begin{eqnarray}
		\begin{aligned}
			\lim_{k\to\infty}(u^{n_k}_t,\varphi)_2=(u_t,\varphi)_2,\quad \lim_{k\to\infty}(A^{1/2}u^{n_k},A^{1/2}\varphi)_2=(A^{1/2}u,A^{1/2}\varphi)_2\quad\text{and}\\
			\lim_{k\to\infty}(\theta^{n_k},\phi)_2=(\theta,\phi)_2.
		\end{aligned}\label{9:8}
	\end{eqnarray}
	Note that \eqref{9:4} implies 
	\begin{eqnarray}
		\theta^{n_k}\to\theta, \quad\text{weakly in}\quad L^\infty (I;H_0^1(\Omega)).
	\end{eqnarray}
	Then
	\begin{eqnarray}
		\lim_{k\to\infty}(A^{1/2}\theta^{n_k},A^{1/2}\phi)_2=(A^{1/2}\theta,A^{1/2}\phi)_2.\label{9:9}
	\end{eqnarray}
	From \eqref{9:5}, we have
	\begin{eqnarray}
		u^{n_k}\to u\quad\text{strongly in}\quad L^2(I;V_0).
	\end{eqnarray}
	Hence, by \eqref{3:2}, we get
	\begin{eqnarray}
		\lim_{k\to\infty}(f(u^{n_k}),\varphi)_2=(f(u),\varphi)_2.\label{9:10}
	\end{eqnarray}
	By using an argument analogous to found in \cite{doi:10.1007/s00245-019-09590-1,doi:10.1016/j.jde.2021.01.030}, we can prove that
	\begin{eqnarray}
		\begin{aligned}
			\lim_{k\to\infty}(A^{\nu_k}u^{n_k}_t,\varphi)_2=(u_t,\varphi)_2,\quad 
			\lim_{k\to\infty}(A^{\sigma_k}\theta^{n_k},\varphi)_2=(\theta,\varphi)_2,\quad\text{and}\\ 
			\lim_{k\to\infty}(A^{\sigma_k}u_t^{n_k},\phi)_2=(u_t,\phi)_2.
		\end{aligned}\label{9:11}
	\end{eqnarray}
	By using the convergences \eqref{9:8}, \eqref{9:9}, \eqref{9:10} and \eqref{9:11}, we can pass to the limit in all terms of \eqref{9:7}. Therefore, $U$ is a full bounded trajectory of \eqref{9:6} and thus $U(0)\in\mathfrak{A}_{\beta_0}$. Finally, by \eqref{9:5} we obtain
	\begin{eqnarray}
		U^{n_k}_0=U^{n_k}(0)\to U(0)\quad\text{in}\quad\pazocal{H},
	\end{eqnarray}
	which concludes the proof of the theorem.
\end{proof}

\appendix

\section{Known results for infinite-dimensional dynamical systems}\label{sec:4}

This section intends to present some definitions and results related to the Theory of Infinite-Dimensional Dynamical Systems that will be used to obtain the main results of this work. Such subjects can be found in \cite{isbn:9780080875460,isbn:9789667021641,isbn:9780821815274,isbn:9781461206453} and \cite{doi:10.1007/978-3-319-22903-4,isbn:9780821866535,ISBN:9780387-877112} (for notion of quasi-stability).
In this section, $T_t$, $t\geq 0$, represents a strongly continuous nonlinear semigroup on a Banach space $(\mathcal{H},|\cdot|_{\mathcal{H}})$. Therefore, the symbol $(\mathcal{H},T_t)$ denotes a dynamical system on the phase space $\mathcal{H}$.

\begin{definition}\label{def:AR:1}
	We say that $(\mathcal{H},T_t)$ is {\it dissipative}, if there exists a bounded {\it absorbing}  set $\mathcal{B}\subset\mathcal{H}$ for $(\mathcal{H},T_t)$, that is, for every bounded set $\mathcal{D}\subset\mathcal{H}$, there is $T_{\mathcal{D}}>0$, such that
	\begin{eqnarray}
		T_t\mathcal{D}\subset\mathcal{B},\quad\text{for all}\quad t\geq T_{\mathcal{D}}.
	\end{eqnarray}
	In this case, we call \textit{radius of dissipativity} at the value $R>0$, such that $\mathcal{B}\subset\{x\in H; \ \|x\|_{H}\leq R\}$ where $\mathcal{B}$ is an absorbing bounded set for $(\mathcal{H},T_t)$.
\end{definition}

\begin{definition}\label{def:AR:2}
	A \textit{global attractor} for $(\mathcal{H},T_t)$ is a closed bounded set ${\bf A}\subset\mathcal{H}$ that enjoy following properties:
	\begin{description}
		\item[a)] ${\bf A}$ is invariant with respect to the $T_t$, that is
		$$T_t{\bf A}={\bf A},\quad\forall t\geq0;$$
		\item[b)] {\bf A} uniformly attracts bounded sets, that is, for every bounded set $\mathcal{D}\subset\mathcal{H}we say that$, we have
		\begin{eqnarray}
			\lim_{t\to\infty}d_{\mathcal{H}}\{T_t\mathcal{D}|{\bf A}\}=0,
		\end{eqnarray}
		where $d_{\mathcal{H}}\{A|B\}$ is \textit{Hausdorff's semi-distance} between sets $A$ and $B$, that is	
		\begin{eqnarray}
			d_{\mathcal{H}}\{A|B\}=\sup_{x\in A}\text{dist}(x,B),\quad\text{with}\quad \mbox{dist}(x,B)=\inf_{y\in B}|x-y|_{\mathcal{H}}.
		\end{eqnarray}
	\end{description}
\end{definition}

\begin{remark}\label{rem:ra:1}
	Note that if $(\mathcal{H},T_t)$ possesses a global attractor ${\bf A}$, then, by Definition \ref{def:AR:2} {\bf (b)}, $(\mathcal{H},T_t)$ is dissipative, since any $\varepsilon$-neighborhood of ${\bf A}$ is an absorbing set for $(\mathcal{H},T_t)$. 
\end{remark}

\begin{definition}\label{def:AR:3}
	We say that $(\mathcal{H},T_t)$ is \textit{asymptotically smooth} if for any positively invariant bounded set $\mathcal{D}\subset\mathcal{H}$,  that is, $T_t\mathcal{D}\subset\mathcal{D}$ for all $t>0$, there
	exists a compact set $\mathcal{K}\subset\overline{\mathcal{D}}$,
	where  $\overline{\mathcal{D}}$ is the closure of $\mathcal{D}$,
	such that
	\begin{eqnarray}
		\lim_{t\to
			+\infty}d_{\mathcal{H}}\{T_t\mathcal{D}|\mathcal{K}\}=0.
	\end{eqnarray}
\end{definition}

A set of relevant importance for us is that of the {\it stationary points} of $(\mathcal{H},T_t)$, denoted by $\mathcal{N}$, that is,
\begin{eqnarray}
	\mathcal{N}=\{w \in\mathcal{H}; \ T_tw=w, \ \forall t>
	0\}.\nonumber
\end{eqnarray}

\begin{definition}\label{def:AR:4}
	The set of all $w\in\mathcal{H}$ such that there is a full trajectory $\gamma=\{u(t); \ t
	\in\mathbb{R}\} $ satisfying
	\begin{eqnarray}
		u(0)=w\quad\text{and}\quad \lim_{t\to-\infty}\text{dist}
		(u(t),\mathcal{N})=0.
	\end{eqnarray}
	is called the \textit{unstable manifold} emanating from $\mathcal{N}$ and is represented by $\mathcal{M}^u(\mathcal{N})$. Note that $\mathcal{M}^u(\mathcal{N})$ is an invariant set
	for $(\mathcal{H},T_t)$ and if ${\bf A}\subset\mathcal{H}$ is global
	attractor for $(\mathcal{H},T_t)$, then
	$\mathcal{M}^u(\mathcal{N})\subset{\bf A}$ (cf. \cite{isbn:9780080875460,ISBN:9780387-877112}).
\end{definition}

\begin{definition}\label{def:AR:5}
	We say that $(\mathcal{H},T_t)$ is \textit{gradient}, if there exists a strict Lyapunov function on
	$\mathcal{H}$, that is, there exists a continuous function $\Phi:\mathcal{H}\to\mathds{R}$ such that, $t\mapsto\Phi(T_tw)$ is non-increasing for any $w\in\mathcal{H}$, and if $\Phi(T_tw)=\Phi(w)$ for all $t>0$, then $w$ is a stationary point of
	$(\mathcal{H},T_t)$.
\end{definition}

\begin{definition}\label{def:AR:6}
	Let $\mathcal{K}\subset\mathcal{H}$ be a compact set, the fractal dimension of $\mathcal{K}$, represented by $\text{dim}_f^{\mathcal{H}}\mathcal{K}$, is the value given by limit
	\begin{eqnarray}
		\text{dim}_f^{\mathcal{H}}\mathcal{K}=\lim_{\varepsilon\to 0}\sup\frac{\ln
			n(M,\varepsilon)}{\ln(1/\varepsilon)},\nonumber
	\end{eqnarray}
	where $n(\mathcal{K},\varepsilon)$ is the minimal number of closed balls of
	radius $\varepsilon$ which covers $\mathcal{K}$.
\end{definition}

\begin{theorem}[\cite{ISBN:9780387-877112}]\label{thm:ar:1:1}
	If $(\mathcal{H},T_t)$ is a asymptotically smooth dynamical system satisfying:
	\begin{description}
		\item[a)] it is gradient with Lyapunov function $\Phi$ bounded from above on any bounded subset of $\mathcal{H}$;
		\item[b)] for every $R>0$, the set $\Phi_R:=\{w;\ \Phi(w)\leq R\}$ is bounded;
		\item[c)] the stationary points set $\mathcal{N}$ is bounded.
	\end{description} 
	Then $(\mathcal{H},T_t)$ possesses a compact global attractor ${\bf A}=\mathcal{M}^u(\mathcal{N})$.
\end{theorem}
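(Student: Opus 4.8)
The plan is to deduce the four classical ingredients that yield a compact global attractor for a gradient system, and then identify that attractor with the unstable manifold. First I would record the elementary consequence of the gradient structure: if $B\subset\mathcal{H}$ is bounded, then by hypothesis (a) the quantity $R_B:=\sup_{w\in B}\Phi(w)$ is finite, and since $t\mapsto\Phi(T_tw)$ is non-increasing (Definition \ref{def:AR:5}) one has $T_tw\in\Phi_{R_B}$ for every $t\geq 0$ and every $w\in B$. Hypothesis (b) then makes $\Phi_{R_B}$ a bounded positively invariant set containing the whole forward orbit $\gamma^+(B)=\bigcup_{t\geq 0}T_tB$; in particular, orbits of bounded sets stay bounded and $\Phi_{R_B}$ provides the positively invariant bounded set needed to apply asymptotic smoothness.

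Next I would run the LaSalle invariance principle to obtain point dissipativity. Applying asymptotic smoothness (Definition \ref{def:AR:3}) to the bounded positively invariant set $\Phi_{R_B}$ yields a compact attracting set, from which the standard argument shows that for each $w$ the $\omega$-limit set $\omega(w)$ is nonempty, compact, invariant, and attracts the (now precompact) orbit. Since $\Phi$ is continuous it is bounded on the compact set $\omega(w)$, and because $\Phi(T_tw)$ is non-increasing the limit $\ell:=\lim_{t\to\infty}\Phi(T_tw)$ exists and equals $\Phi(z)$ for every $z\in\omega(w)$; thus $\Phi\equiv\ell$ on $\omega(w)$. Invariance of $\omega(w)$ then forces $\Phi(T_tz)=\Phi(z)$ for all $t>0$ and all $z\in\omega(w)$, so the strict Lyapunov property gives $\omega(w)\subset\mathcal{N}$. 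Using hypothesis (c), I would fix a bounded neighborhood $\mathcal{B}_0$ of $\mathcal{N}$; since $\omega(w)\subset\mathcal{N}\subset\mathcal{B}_0$ attracts the orbit, every trajectory eventually enters $\mathcal{B}_0$, which is precisely point dissipativity.

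With bounded orbits of bounded sets, point dissipativity, and asymptotic smoothness in hand, I would invoke the classical criterion (Hale--Ladyzhenskaya, as quoted in \cite{ISBN:9780387-877112}) to conclude that $(\mathcal{H},T_t)$ possesses a compact global attractor $\mathbf{A}$ in the sense of Definition \ref{def:AR:2}. It remains to identify $\mathbf{A}$ with $\mathcal{M}^u(\mathcal{N})$: the inclusion $\mathcal{M}^u(\mathcal{N})\subset\mathbf{A}$ is immediate from Definition \ref{def:AR:4}, while for the reverse inclusion I would use that every point of the invariant compact set $\mathbf{A}$ lies on a complete bounded trajectory and run the same LaSalle argument backward in time, so that the non-increase of $\Phi$ forces the $\alpha$-limit set of such a trajectory to consist of equilibria; hence the trajectory emanates from $\mathcal{N}$ and the point belongs to $\mathcal{M}^u(\mathcal{N})$. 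The main obstacle is the dissipativity step: converting the purely variational gradient structure into genuine point dissipativity requires the asymptotic compactness supplied by Definition \ref{def:AR:3} together with the strict (not merely weak) Lyapunov property, and the backward LaSalle argument on complete trajectories inside $\mathbf{A}$ is the delicate part of the final identification.
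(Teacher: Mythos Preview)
Your sketch is a correct outline of the classical argument, but note that the paper does not supply a proof of this theorem at all: it is stated in the appendix as a known result quoted from \cite{ISBN:9780387-877112} (Chueshov--Lasiecka), and is used as a black box in the proof of Theorem~\ref{thm:4:1}. There is therefore no ``paper's own proof'' to compare against; what you have written is essentially the standard LaSalle-plus-asymptotic-smoothness argument that one finds in the cited reference, and it is sound as a proof plan.
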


\begin{assumption}\label{ar:assump:1}
	Let $(\mathcal{H},T_t)$ be a dynamical system such that: 
	\begin{description}
		\item[a)] the phase space is of the form $\mathcal{H}=X\times Y\times Z$, where $(X,|\cdot|_X)$, $(Y,|\cdot|_Y)$ and $(Z,|\cdot|_Z)$ are reflexives Banach spaces with $X$ compactly embedded in $Y$ and  $|\cdot|^2_{\mathcal{H}}=|\cdot|^2_X+|\cdot|^2_Y+|\cdot|^2_Z$;
		\item[b)] the  evolution operator $T_t$ is of the form
		\begin{eqnarray}
			T_tw_0=(\phi(t),\phi_t(t),\vartheta(t)),\quad t\geq 0,\quad
			w_0=(\phi(0),\phi_t(0),\vartheta(0))\in\mathcal{H},\label{ar:1:2}
		\end{eqnarray}
		where the functions $\phi$ and $\vartheta$ have regularities
		\begin{eqnarray}
			\phi\in C(\mathbb{R}_+,X)\cap C^1(\mathbb{R}_+,Y)\quad\text{and}\quad \vartheta\in C(\mathbb{R}_+,Z).\label{ar:1:3}
		\end{eqnarray}
	\end{description}
	
\end{assumption}

\begin{definition}\label{ar:def:1:1}
	We say that a dynamical system $(\mathcal{H},T_t)$ satisfying Assumption \ref{ar:assump:1}, is {\it quasi-stable} on $\mathcal{B}\subset\mathcal{H}$, if there exist a compact seminorm $\eta_X$ on $X$ and nonnegative funtions $a(t)$, $b(t)$ and $c(t)$ defined in $\mathds{R}_+$ such that
	\begin{description}
		\item[a)] $a(t)$ and $c(t)$ are locally bounded;
		\item[b)] $b(t)\in L^1(\mathds{R}_+)$ and $\lim_{t\to\infty}b(t)=0$;
		\item[c)] for every $w_1,w_2\in\mathcal{B}$ and $t>0$, if $T_tw_i=(\phi^i(t),\phi_t^i(t),\vartheta^i(t))$, $i=1,2$, then  the following relations
		\begin{eqnarray}
			\|T_tw_1-T_tw_2\|_{\mathcal{H}}^2\leq
			a(t)\|w_1-w_2\|_{\mathcal{H}}^2\label{3:1:5}
		\end{eqnarray}
		and
		\begin{eqnarray}
			\|T_tw_1-T_tw_2\|_{\mathcal{H}}^2\leq b(t)\|w_1-w_2\|_{\mathcal{H}}^2+c(t)\sup_{0\leq s\leq
				t}[\eta_X(\phi^1(s)-\phi^2(s))]^2\label{3:1:6}
		\end{eqnarray}
		hold.
	\end{description}
\end{definition}

The following results can be found in \cite[Chapter
7]{ISBN:9780387-877112}, show us how strong the property of
quasi-stability is for a dynamical system. The first relates the
quasi-stability to the asymptotically smooth and the second relates
the quasi-stability to the fractal dimension of an attractor.

\begin{theorem}\label{thm:ar:1:2}
	Let $(\mathcal{H},T_t)$ be a dynamical system satisfying Assumption \ref{ar:assump:1}. If $(\mathcal{H},T_t)$ is quasi-stable on every bounded positively  invariant set $\mathcal{B}\subset\mathcal{H}$, then $(\mathcal{H},T_t)$ is asymptotically smooth.
\end{theorem}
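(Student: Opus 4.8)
\textbf{Proof proposal for Theorem \ref{thm:ar:1:2}.}

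The plan is to verify the abstract criterion for asymptotic smoothness due to Ceron--Lopes / Hale (and reproved in \cite[Chapter 7]{ISBN:9780387-877112}): a dynamical system $(\mathcal{H},T_t)$ is asymptotically smooth provided that for every bounded positively invariant set $\mathcal{B}$ one can write, for $t$ large, $T_tw_1-T_tw_2$ as a sum of a contraction term and a term that is precompact in an appropriate sense. Concretely, fix a bounded positively invariant set $\mathcal{B}\subset\mathcal{H}$ and let $a,b,c$ and the compact seminorm $\eta_X$ be as in the quasi-stability inequalities \eqref{3:1:5}--\eqref{3:1:6} on $\mathcal{B}$. First I would record that \eqref{3:1:6} already has the right shape: the term $b(t)\|w_1-w_2\|_{\mathcal{H}}^2$ decays to zero uniformly in $w_1,w_2\in\mathcal{B}$ by Definition \ref{ar:def:1:1}\textbf{(b)}, so it plays the role of the contractive part, while $c(t)\sup_{0\le s\le t}[\eta_X(\phi^1(s)-\phi^2(s))]^2$ must be shown to generate a precompact correction.

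Next I would make the compactness of the second term precise. Since $\eta_X$ is a \emph{compact} seminorm on $X$, bounded sequences in $X$ have subsequences that are Cauchy with respect to $\eta_X$; combined with the continuity in $t$ afforded by Assumption \ref{ar:assump:1}\textbf{(b)} (the first component $\phi\in C(\mathbb{R}_+,X)$), one shows that for any sequence $w_n\in\mathcal{B}$ and any fixed $t$, the functions $s\mapsto\phi^n(s)$, $s\in[0,t]$, form a family for which $\sup_{0\le s\le t}\eta_X(\phi^n(s)-\phi^m(s))\to 0$ along a subsequence. This is the standard device: one passes to a diagonal subsequence over a countable dense set of times, then uses equicontinuity in $\eta_X$ (itself a consequence of uniform continuity of $s\mapsto\phi^n(s)$ on the compact interval, uniform in $n$ because $\mathcal{B}$ is positively invariant and bounded) to upgrade to uniform convergence in $s$. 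Here I would be a little careful: equicontinuity with respect to $\eta_X$ does not come for free from $\phi^n\in C([0,t],X)$ alone, so I would instead invoke the uniform-in-$n$ bound $T_s w_n\in\mathcal{B}$ together with the evolution-operator structure to control the modulus of continuity, or simply cite the corresponding lemma in \cite[Chapter 7]{ISBN:9780387-877112} where this argument is carried out in the abstract setting.

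With that in hand, I would assemble the asymptotic-smoothness conclusion. Given $\varepsilon>0$, choose $t_\varepsilon$ so large that $\sup_{\mathcal{B}\times\mathcal{B}}b(t_\varepsilon)\|w_1-w_2\|_{\mathcal{H}}^2<\varepsilon$ (possible since $\mathcal{B}$ is bounded and $b(t)\to 0$). Then \eqref{3:1:6} shows that the map $T_{t_\varepsilon}$ is, up to an $\varepsilon$-error, dominated on $\mathcal{B}$ by the precompact seminorm term; by the previous paragraph any sequence $T_{t_\varepsilon}w_n$ has a subsequence that is Cauchy modulo $\varepsilon$ in $\mathcal{H}$. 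A standard diagonal extraction over $\varepsilon=1/j$ then produces a genuinely Cauchy, hence convergent, subsequence of $\{T_{t_n}w_n\}$ for any $t_n\to\infty$, which is exactly the criterion (e.g. the formulation via the Kuratowski measure of noncompactness, or the ``$T_t=$ contraction $+$ compact'' decomposition) guaranteeing the existence of the attracting compact set $\mathcal{K}\subset\overline{\mathcal{B}}$ in Definition \ref{def:AR:3}. The main obstacle I anticipate is precisely the equicontinuity issue flagged above: turning pointwise-in-$t$ convergence of $\eta_X(\phi^n(s))$ into the uniform sup over $s\in[0,t]$ required by \eqref{3:1:6} needs a uniform modulus of continuity for the trajectories, and the cleanest route is to lean on the abstract lemma in \cite[Chapter 7]{ISBN:9780387-877112} rather than re-deriving it here.
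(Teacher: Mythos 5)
Your sketch is correct and follows the standard quasi-stability-implies-asymptotic-smoothness argument: the $b(t)$ term serves as the contraction, the compact-seminorm term is handled by a weak-compactness/equicontinuity extraction (using that $\phi^n\in C^1(\mathbb{R}_+,Y)$ with $\phi^n_t(s)$ uniformly bounded because $T_sw_n$ stays in the bounded invariant set), and one concludes via the Ceron--Lopes/Khanmamedov-type criterion. The paper itself offers no proof of this theorem --- it is quoted as a known result from \cite[Chapter 7]{ISBN:9780387-877112} --- so your argument coincides with the paper's (cited) one, and you correctly isolate the equicontinuity of $s\mapsto\eta_X(\phi^n(s))$ as the only delicate step.
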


\begin{theorem}\label{thm:ar:1:3}
	Let $(\mathcal{H},T_t)$ be a dynamical system satisfying Assumption \ref{ar:assump:1}. If $(\mathcal{H},T_t)$ possesses a compact global attractor ${\bf A}$ and is quasi-stable on ${\bf A}$. Then the fractal
	dimension of ${\bf A}$ is finite.
\end{theorem}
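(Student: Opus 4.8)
This is the abstract finite-dimensionality criterion of the quasi-stability theory, and I would argue as in Chueshov and Lasiecka \cite{ISBN:9780387-877112}: pass to the discrete system generated by a single time-step map, upgrade the quasi-stability estimate to a genuine ``squeezing'' estimate, and close with a Ladyzhenskaya-type covering argument. Concretely, by Definition~\ref{ar:def:1:1}{\bf (b)} ($b(t)\to0$) I would fix $t^*>0$ with $q:=b(t^*)<\frac14$ and set $V:=T_{t^*}$. Since ${\bf A}$ is a global attractor it is invariant, so $V({\bf A})={\bf A}$, and by \eqref{3:1:5} (with $a(\cdot)$ locally bounded) $V$ is Lipschitz on ${\bf A}$. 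Writing $\pi_X$ for the projection of $\mathcal H=X\times Y\times Z$ onto $X$, the estimate \eqref{3:1:6} at time $t^*$ reads
\[
\|Vw_1-Vw_2\|_{\mathcal H}^2\le q\,\|w_1-w_2\|_{\mathcal H}^2+c(t^*)\,\sup_{0\le s\le t^*}\big[\eta_X(\pi_X T_sw_1-\pi_X T_sw_2)\big]^2,\qquad w_1,w_2\in{\bf A}.
\]

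The next step records the compactness that makes the second term harmless. Because ${\bf A}$ is compact and $T_s({\bf A})={\bf A}$, all three components of $T_sw$ stay in fixed bounded subsets of $X$, $Y$, $Z$ for every $s\ge0$ and $w\in{\bf A}$; in particular, by Assumption~\ref{ar:assump:1}{\bf (b)} the maps $s\mapsto\pi_X T_sw$, $w\in{\bf A}$, are uniformly bounded in $C([0,t^*];X)$ and, since their derivatives $s\mapsto\pi_Y T_sw$ stay bounded in $Y$, uniformly Lipschitz — hence equicontinuous — in $C([0,t^*];Y)$. As $X\hookrightarrow\hookrightarrow Y$ and $\eta_X$ is a compact seminorm on $X$, the Arzelà--Ascoli theorem shows that $\{\,s\mapsto\pi_X T_sw:w\in{\bf A}\}$ is precompact in $C([0,t^*];X)$ for the seminorm $\sup_{0\le s\le t^*}\eta_X(\cdot)$, while \eqref{3:1:5} also gives the Lipschitz bound $\sup_{0\le s\le t^*}\eta_X(\pi_X T_sw_1-\pi_X T_sw_2)\le C\|w_1-w_2\|_{\mathcal H}$. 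Following \cite[Chapter~7]{ISBN:9780387-877112}, this compactness can be parlayed, on ${\bf A}$, into a squeezing estimate
\[
\|Vw_1-Vw_2\|_{\mathcal H}^2\le q'\,\|w_1-w_2\|_{\mathcal H}^2+K\,[\mathsf n(w_1-w_2)]^2,\qquad w_1,w_2\in{\bf A},
\]
with $q'<\frac14$ and $\mathsf n$ a \emph{homogeneous} compact seminorm on $\mathcal H$ (for instance one induced by a space into which $\mathcal H$ embeds compactly).

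The conclusion is then the standard covering estimate. Given a cover of ${\bf A}$ by $N=n({\bf A},r)$ balls of radius $r$, invariance gives ${\bf A}=\bigcup_i V\big({\bf A}\cap\overline B(a_i,r)\big)$; I would subdivide each ${\bf A}\cap\overline B(a_i,r)$ into $m_0$ subsets of $\mathsf n$-diameter $\le\rho r$, where $m_0$ is the number of $\mathsf n$-balls of radius $\rho/2$ needed to cover the unit ball of $\mathcal H$ — finite because $\mathsf n$ is compact, and independent of $r$ because $\mathsf n$ is homogeneous. On each such piece the squeezing estimate bounds the $\mathcal H$-diameter of its image under $V$ by $(4q'+K\rho^2)^{1/2}r=\theta r$, and $\theta<1$ once $\rho$ is chosen small enough (possible since $4q'<1$). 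Hence $n({\bf A},\theta r)\le m_0\,n({\bf A},r)$ for all $r>0$, and iterating, $n({\bf A},\theta^k r_0)\le m_0^{\,k}n({\bf A},r_0)$, so that $\mathrm{dim}_f^{\mathcal H}{\bf A}\le\ln m_0/\ln(1/\theta)<\infty$.

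The point requiring real care is the passage in the second step. The term appearing in \eqref{3:1:6} is a supremum over the entire trajectory and, because $T_s$ is nonlinear, is not a seminorm of $w_1-w_2$ at all; since a compact subset of an infinite-dimensional space can have infinite fractal dimension, mere precompactness of that term does not suffice, and one must genuinely extract from it a \emph{homogeneous} compact seminorm of $w_1-w_2$ so that the covering count $m_0$ in the third step is uniform in the scale. This is exactly where the compactness of ${\bf A}$, the invariance $T_s({\bf A})={\bf A}$, the $C^1$-in-time regularity built into Assumption~\ref{ar:assump:1}, and the compactness of the embedding $X\hookrightarrow\hookrightarrow Y$ all enter; the details are carried out in \cite[Chapter~7]{ISBN:9780387-877112}, and everything else is bookkeeping.
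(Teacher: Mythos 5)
The paper does not prove this statement at all: Theorem~\ref{thm:ar:1:3} is quoted verbatim as a known result, with the proof deferred to \cite[Chapter~7]{ISBN:9780387-877112}. Your sketch is a faithful reconstruction of the argument given there (reduction to the time-$t^*$ map, upgrading \eqref{3:1:6} to a squeezing estimate via the compact seminorm and the Lipschitz/equicontinuity properties supplied by \eqref{3:1:5} and Assumption~\ref{ar:assump:1}, then the Ladyzhenskaya covering iteration), so it is essentially the same approach as the source the paper relies on, with the one genuinely delicate step --- converting the trajectory-supremum term into a scale-uniform covering count --- correctly identified and deferred to the same reference.
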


\begin{theorem}\label{thm:ar:1:3a}
	Let $(\mathcal{H},T_t)$ be a dynamical system satisfying Assumption \ref{ar:assump:1} that possesses a global attractor ${\bf A}$. If $(\mathcal{H},T_t)$ is quasi-stable on ${\bf A}$ and $ c_\infty=\sup_{t \in\mathds{R}_+} c(t)<\infty$ in \eqref{3:1:6}. Then any full trajectory $\gamma=\{(\phi(t),\phi_t(t), \vartheta(t)): t \in\mathds{R}\} $ belongs to the global attractor enjoys the following regularity properties,
	\begin{eqnarray}
		\phi_t\in L^\infty(\mathds{R};X)\cap C(\mathds{R};Y), \quad \phi_{tt}\in L^\infty(\mathds{R};Y),\quad \vartheta_t \in L^\infty(\mathds{R}; Z).
	\end{eqnarray}
	In addition, there exist $R>0$, depending on $c_\infty $, on seminorm $\eta_X$ and on embedding of $X$  into $Y$ such that
	\begin{eqnarray}
		|\phi_t(t)|_X^2 + |\phi_{tt}(t)|_Y^2+|\vartheta_t(t)|_Z^2\leq R, \quad t\in\mathds{R}.
	\end{eqnarray}
\end{theorem}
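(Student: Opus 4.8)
The plan is to control the time difference quotients of a full trajectory on $\mathbf{A}$ by exploiting that the quasi-stability estimate \eqref{3:1:6} can be ``pushed back to $-\infty$'' along a trajectory that exists for all time. Write $w(t)=(\phi(t),\phi_t(t),\vartheta(t))$ for the given full trajectory and fix $h\neq 0$. Since $\mathbf{A}$ is invariant, the shifted curve $t\mapsto w(t+h)$ is again a full trajectory on $\mathbf{A}$, and for every $\tau<t$ one has $w(t)=T_{t-\tau}w(\tau)$ and $w(t+h)=T_{t-\tau}w(\tau+h)$. Applying \eqref{3:1:6} to the pair $w(\tau),w(\tau+h)$ over the interval of length $t-\tau$ gives
\begin{eqnarray}
	\|w(t+h)-w(t)\|_{\mathcal{H}}^2\le b(t-\tau)\|w(\tau+h)-w(\tau)\|_{\mathcal{H}}^2+c(t-\tau)\sup_{\tau\le s\le t}\eta_X(\phi(s+h)-\phi(s))^2. \nonumber
\end{eqnarray}
Because $\mathbf{A}$ is bounded in $\mathcal{H}$, the factor $\|w(\tau+h)-w(\tau)\|_{\mathcal{H}}^2$ stays bounded uniformly in $\tau$, while $b(t-\tau)\to 0$ as $\tau\to-\infty$; using also $c(t-\tau)\le c_\infty$, I would let $\tau\to-\infty$ to obtain, after dividing by $h^2$,
\begin{eqnarray}
	\|D_h(t)\|_{\mathcal{H}}^2\le c_\infty\sup_{s\in\mathds{R}}\eta_X\!\left(D_h^X(s)\right)^2,\qquad D_h(t):=\frac{w(t+h)-w(t)}{h}, \nonumber
\end{eqnarray}
where $D_h^X$ denotes the $X$-component of $D_h$.

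The decisive step is an absorption argument resting on the compactness of the seminorm $\eta_X$. First I note that for each fixed $h$ the quantity $\sup_{t}\|D_h(t)\|_{\mathcal{H}}^2$ is \emph{a priori finite}, being bounded by $(2R_{\mathbf A}/|h|)^2$ with $R_{\mathbf A}:=\sup_{v\in\mathbf A}\|v\|_{\mathcal{H}}$; this finiteness is what makes the absorption legitimate. Since $X$ is compactly embedded in $Y$ and $\eta_X$ is a compact seminorm, for every $\epsilon>0$ there is $C_\epsilon>0$ with $\eta_X(v)^2\le \epsilon|v|_X^2+C_\epsilon|v|_Y^2$ for all $v\in X$. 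I apply this with $v=D_h^X(s)$, use $|D_h^X(s)|_X^2\le\|D_h(s)\|_{\mathcal{H}}^2$ and the uniform bound $|D_h^X(s)|_Y\le\sup_r|\phi_t(r)|_Y\le R_{\mathbf A}$ (valid since $\phi\in C^1(\mathds{R};Y)$ by \eqref{ar:1:3} and $w(r)\in\mathbf A$), and take the supremum over $t$ to reach
\begin{eqnarray}
	\sup_{t}\|D_h(t)\|_{\mathcal{H}}^2\le c_\infty\epsilon\,\sup_{t}\|D_h(t)\|_{\mathcal{H}}^2+c_\infty C_\epsilon R_{\mathbf A}^2. \nonumber
\end{eqnarray}
Choosing $\epsilon=(2c_\infty)^{-1}$ lets me absorb the first term on the right into the left, yielding a bound $\sup_{t}\|D_h(t)\|_{\mathcal{H}}^2\le R:=2c_\infty C_\epsilon R_{\mathbf A}^2$ that is \emph{independent of $h$}; concretely $|D_h^X(t)|_X^2+|D_h^Y(t)|_Y^2+|D_h^Z(t)|_Z^2\le R$ for all $t\in\mathds{R}$ and all $h\neq0$.

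It remains to pass to the limit $h\to 0$. Since $X,Y,Z$ are reflexive, the families $D_h^X(t),D_h^Y(t),D_h^Z(t)$ are relatively weakly compact; and because $\phi\in C^1(\mathds{R};Y)$ the quotients $D_h^X(t)\to\phi_t(t)$ strongly in $Y$, so the weak-$X$ limit is necessarily $\phi_t(t)$. This identifies $\phi_t(t)\in X$ with $|\phi_t(t)|_X^2\le R$, i.e. $\phi_t\in L^\infty(\mathds{R};X)$; likewise the weak limits of $D_h^Y$ and $D_h^Z$ are the distributional derivatives $\phi_{tt}$ and $\vartheta_t$, giving $\phi_{tt}\in L^\infty(\mathds{R};Y)$ and $\vartheta_t\in L^\infty(\mathds{R};Z)$ together with the claimed pointwise bound $|\phi_t(t)|_X^2+|\phi_{tt}(t)|_Y^2+|\vartheta_t(t)|_Z^2\le R$. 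The continuity $\phi_t\in C(\mathds{R};Y)$ is inherited directly from Assumption \ref{ar:assump:1} via \eqref{ar:1:3}. I expect the absorption step to be the main obstacle: it succeeds only because $\eta_X$ is genuinely weaker than $|\cdot|_X$ (encoded in the interpolation inequality that compactness provides) and because the difference quotients are a priori finite in $\mathcal{H}$ for each fixed $h$; the resulting constant $R$ then depends exactly on $c_\infty$, on $\eta_X$ through $C_\epsilon$, and on the embedding $X\hookrightarrow Y$ through $R_{\mathbf A}$ and the interpolation constant.
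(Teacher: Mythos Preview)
The paper does not give its own proof of this theorem: it appears in the appendix ``Known results for infinite-dimensional dynamical systems'' and is quoted from Chueshov--Lasiecka (reference \cite{ISBN:9780387-877112}, Chapter~7) without argument. There is therefore nothing in the paper to compare against directly.

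That said, your argument is the standard one from that reference and is essentially correct. The three moves --- applying the stabilizability estimate \eqref{3:1:6} to the time-shifted pair $w(\tau),w(\tau+h)$ on the invariant set $\mathbf{A}$, sending $\tau\to-\infty$ to kill the $b$-term, and then absorbing the compact seminorm via an interpolation inequality --- are exactly how Chueshov and Lasiecka prove the result. Two small remarks. First, the interpolation inequality $\eta_X(v)^2\le\epsilon|v|_X^2+C_\epsilon|v|_Y^2$ is not entirely immediate from the bare definition of a compact seminorm; it follows because compactness of $\eta_X$ on the reflexive space $X$ forces $\eta_X(v_n)\to0$ whenever $v_n\rightharpoonup0$ in $X$, and one then argues by contradiction using the compact embedding $X\hookrightarrow Y$ to identify the weak limit as zero. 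You use this inequality correctly, but it is worth flagging as a lemma rather than asserting it. Second, the pointwise identification of $\phi_{tt}$ and $\vartheta_t$ from the bounded difference quotients $D_h^Y,D_h^Z$ is really an $L^\infty$ statement obtained via distributional convergence and weak-$*$ compactness in $L^\infty(\mathds{R};Y)$ and $L^\infty(\mathds{R};Z)$; your one-line ``the weak limits are the distributional derivatives'' is right but compresses this step. Neither point is a gap; both are routine once named.
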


\begin{definition}\label{def:AR:7}
	We say that a compact set ${\bf A}_{\exp}\subset\mathcal{H}$ is a {\it fractal exponential attractor} of $(\mathcal{H},T_t)$ if it has finite fractal dimension, is positively invariant, and for any bounded set $\mathcal{B}\subset\mathcal{H}$, there
	exist
	constants $T_{\mathcal{B}}, C_{\mathcal{B}}>0$ and $\gamma_{\mathcal{B}}>0$ such that for all
	$t\geq T_{\mathcal{B}}$,$$
	\mbox{dist}(T_t\mathcal{B},{\bf A}_{\exp})\leq
	C_{\mathcal{B}}\exp(-\gamma_{\mathcal{B}}(t-T_{\mathcal{B}})). $$
	In some cases, one can prove the existence of an exponential attractor whose dimension is finite in some extended space $\widetilde{\mathcal{H}}\supset \mathcal{H}$ only.
\end{definition}

\begin{theorem}\label{thm:ar:1:4}
	Let $(\mathcal{H},T_t)$ be dissipative, satisfying the Assumption \ref{ar:assump:1} and quasi-stable on some bounded absorbing set $\mathcal{B}$. If there exists a space $\widetilde{\mathcal{H}} \supset\mathcal{H}$ such that, for every $w\in\mathcal{B}$, $t\mapsto T_tw$ is Hölder continuous in $\widetilde{\mathcal{H}}$, that is, there exist $0<\gamma\leq 1$ and $C_{\mathcal{B},T}>0$ such that
	\begin{eqnarray}
		\|T_{t_1}w-T_{t_2}w\|_{\widetilde{\mathcal{H}}}\leq C_{\mathcal{B},T}|t_1-t_2|^\gamma,\quad \forall t_1,t_2\in[0,T],\ \forall w\in\mathcal{B}.
	\end{eqnarray}
	Then $(\mathcal{H},T_t)$ possesses a generalized fractal exponential attractor with finite dimension in $\widetilde{\mathcal{H}}$.
\end{theorem}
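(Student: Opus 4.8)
The plan is to reduce the continuous-time problem to a discrete-time one, construct a fractal exponential attractor for the time-$t_*$ map by reading the quasi-stability estimate \eqref{3:1:6} as a squeezing property, and then lift the discrete object to continuous time, using the H\"older-in-time hypothesis precisely to control the fractal dimension in the extended space $\widetilde{\mathcal{H}}$.

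First I would fix a closed, bounded, forward-invariant absorbing set. Since $(\mathcal{H},T_t)$ is dissipative with bounded absorbing set $\mathcal{B}$, replacing $\mathcal{B}$ by $\overline{\bigcup_{t\ge t_0}T_t\mathcal{B}}$ for $t_0$ large, I may assume $\mathcal{B}$ is closed, bounded and positively invariant, with \eqref{3:1:5}--\eqref{3:1:6} valid on it. Because $b(t)\to 0$ by Definition \ref{ar:def:1:1}, I choose $t_*>0$ with $b(t_*)\le \frac14$ and set $V:=T_{t_*}$, so that $V\mathcal{B}\subset\mathcal{B}$ and, writing $T_sw_i=(\phi^i(s),\phi^i_t(s),\vartheta^i(s))$, estimate \eqref{3:1:6} becomes
$$\|Vw_1-Vw_2\|_{\mathcal{H}}^2\le \frac14\|w_1-w_2\|_{\mathcal{H}}^2+c(t_*)\sup_{0\le s\le t_*}[\eta_X(\phi^1(s)-\phi^2(s))]^2.$$

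Next I would verify that the trajectory quantity $\sup_{0\le s\le t_*}\eta_X(\phi^1(s)-\phi^2(s))$ supplies the compact ingredient demanded by the abstract discrete exponential-attractor theorem. Here the regularity built into Assumption \ref{ar:assump:1}, namely $\phi\in C(\mathbb{R}_+;X)\cap C^1(\mathbb{R}_+;Y)$, together with the compact embedding $X\hookrightarrow Y$, yields uniform equicontinuity of the segments $s\mapsto\phi[w](s)$ for $w\in\mathcal{B}$; combined with the compactness of the seminorm $\eta_X$ on $X$, an Arzel\`a--Ascoli argument shows that the family of segments $\{\,s\mapsto\phi[w](s):w\in\mathcal{B}\,\}$ is precompact in the topology induced by $\sup_{[0,t_*]}\eta_X(\cdot)$. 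This is exactly the input of the discrete theorem of \cite{ISBN:9780387-877112}: a map that is Lipschitz on $\mathcal{B}$ in $\mathcal{H}$ (from \eqref{3:1:5}, since $a(t_*)$ is finite by Definition \ref{ar:def:1:1}) and satisfies a squeezing estimate with contraction factor below $1$ modulo a compact seminorm admits a fractal exponential attractor $M_*\subset\mathcal{B}$ for $(\mathcal{B},V)$, with $\dim_f^{\mathcal{H}}M_*<\infty$ and discrete exponential attraction $\mathrm{dist}(V^kw,M_*)\le Ce^{-\omega k}$.

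Finally I would lift $M_*$ by setting $M^{\exp}:=\overline{\bigcup_{t\in[0,t_*]}T_tM_*}$. Positive invariance and exponential attraction follow in the standard way: discrete exponential attraction of $M_*$ plus the local-in-$t$ Lipschitz bound $a(t)$ on $[0,t_*]$ in \eqref{3:1:5} transfer to continuous-time exponential attraction of $\mathcal{B}$, and since $\mathcal{B}$ is absorbing, of every bounded set, giving the estimate of Definition \ref{def:AR:7}. The main obstacle is the finite-dimensionality of $M^{\exp}$: because $t\mapsto T_tw$ is not Lipschitz in $\mathcal{H}$, I cannot expect $\dim_f^{\mathcal{H}}M^{\exp}<\infty$, and this is exactly why the extended space is needed. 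I would instead view $M^{\exp}$ as the image of the finite-dimensional compact set $[0,t_*]\times M_*$ under $(t,w)\mapsto T_tw$, which is Lipschitz in $w$ in $\mathcal{H}\subset\widetilde{\mathcal{H}}$ by \eqref{3:1:5} and H\"older of order $\gamma$ in $t$ in $\widetilde{\mathcal{H}}$ by hypothesis, hence jointly H\"older of exponent $\gamma$ into $\widetilde{\mathcal{H}}$. Since a H\"older map of exponent $\gamma$ raises fractal dimension by at most the factor $\frac{1}{\gamma}$, one obtains $\dim_f^{\widetilde{\mathcal{H}}}M^{\exp}\le \frac{1}{\gamma}\bigl(\dim_f^{\mathcal{H}}M_*+1\bigr)<\infty$, which is precisely the generalized fractal exponential attractor, finite-dimensional in $\widetilde{\mathcal{H}}$, asserted in the statement.
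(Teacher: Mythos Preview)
The paper does not supply its own proof of this theorem: it is listed in the appendix among the ``known results for infinite-dimensional dynamical systems'' and is quoted from \cite{ISBN:9780387-877112} without argument. There is therefore nothing in the paper to compare your proposal against.

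That said, your outline is the standard Chueshov--Lasiecka construction and is essentially correct: pass to a positively invariant absorbing set, fix $t_*$ with $b(t_*)<1$ so that the quasi-stability estimate \eqref{3:1:6} becomes a discrete squeezing property for $V=T_{t_*}$, invoke the discrete fractal-exponential-attractor theorem to obtain $M_*\subset\mathcal{B}$ with finite fractal dimension in $\mathcal{H}$, and then set $M^{\exp}=\bigcup_{t\in[0,t_*]}T_tM_*$. Your explanation of why the extended space $\widetilde{\mathcal{H}}$ enters---the map $(t,w)\mapsto T_tw$ is only H\"older in $t$ in $\widetilde{\mathcal{H}}$, so the image $M^{\exp}$ of the finite-dimensional set $[0,t_*]\times M_*$ inherits finite fractal dimension only in $\widetilde{\mathcal{H}}$---is exactly the mechanism behind the ``generalized'' qualifier. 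The one place to be a bit more careful is the compactness step for the trajectory seminorm: the argument in the reference packages this via an auxiliary extended space rather than a direct Arzel\`a--Ascoli on segments, but the content is the same.
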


\begin{definition}\label{def:A:9}
	Consider a family of dynamical systems (on the same phase space $\mathcal{H}$) $\{(\mathcal{H},T^\lambda_t)),\lambda\in\Lambda\}$, indexed in a metric space $\Lambda$, such that, for every $\lambda\in\Lambda$, $(\mathcal{H},T_t^\lambda)$ possesses a global attractor represented by ${\bf A}_{\lambda}$. We say that the family of attractors $\{{\bf A}_{\lambda}\}_{\lambda\in\Lambda}$ is upper semicontinuous at $\lambda_0\in\Lambda$, if
	\begin{eqnarray}
		\lim_{\lambda\in\Lambda\to\lambda_0}d_{\mathcal{H}}\{{\bf A}_{\lambda}|{\bf A}_{\lambda_0}\}=0.\label{A:11}
	\end{eqnarray}
\end{definition}

{\bf Acknowledgements}
\\
The authors are grateful to the referees for the valuable suggestions that improved this paper.

{\small
    
}

\EditInfo{August 21, 2023}{December 5, 2023}{Serena Dipierro}


\begin{thebibliography}{10}
    
    \bibitem{isbn:9780080875460}
    A.~V. Babin and M.~I. Vishik.
    \newblock {\em Attractors of Evolution Equations}.
    \newblock Studies in Mathematics and its Applications. Elsevier Science, 1992.
    
    \bibitem{doi:10.1007/978-3-319-22903-4}
    I.~Chueshov.
    \newblock {\em Dynamics of Quasi-Stable Dissipative Systems}.
    \newblock Universitext. Springer International Publishing, 2015.
    
    \bibitem{doi:10.1007/s10884-004-4289-x}
    I.~Chueshov and I.~Lasiecka.
    \newblock Attractors for second-order evolution equations with a nonlinear damping.
    \newblock {\em Journal of Dynamics and Differential Equations}, 16(2):469--512, 2004.
    
    \bibitem{isbn:9789667021641}
    I.~D. Chueshov.
    \newblock {\em Introduction to the Theory of Infinite-dimensional Dissipative Systems}.
    \newblock University lectures in contemporary mathematics. Acta Scientific Publishing House, 2002.
    
    \bibitem{isbn:9780821866535}
    I.~D. Chueshov and I.~Lasiecka.
    \newblock {\em Long-Time Behavior of Second Order Evolution Equations with Nonlinear Damping}.
    \newblock Memoirs of the American Mathematical Society. American Mathematical Soc., 2008.
    
    \bibitem{ISBN:9780387-877112}
    I.~D. Chueshov and I.~Lasiecka.
    \newblock {\em Von Karman Evolution Equations: Well-posedness and Long Time Dynamics}.
    \newblock Springer Monographs in Mathematics. Springer New York, 2010.
    
    \bibitem{doi:10.2307/24899128}
    C.~Foias and E.~Olson.
    \newblock Finite fractal dimension and hölder-lipschitz parametrization.
    \newblock {\em Indiana University Mathematics Journal}, 45(3):603--616, 1996.
    
    \bibitem{doi:10.1016/j.jde.2021.01.030}
    M.~Freitas, A.~Ramos, A.~Özer, and D.~{Almeida Júnior}.
    \newblock Long-time dynamics for a fractional piezoelectric system with magnetic effects and fourier's law.
    \newblock {\em Journal of Differential Equations}, 280:891--927, 2021.
    
    \bibitem{doi:10.1007/s00245-019-09590-1}
    M.~M. Freitas, A.~J.~A. Ramos, and M.~L. Santos.
    \newblock Existence and upper-semicontinuity of global attractors for binary mixtures solids with fractional damping.
    \newblock {\em Applied Mathematics \& Optimization}, 83(3):1353--1385, 2021.
    
    \bibitem{isbn:9780821815274}
    J.~K. Hale.
    \newblock {\em Asymptotic Behavior of Dissipative Systems}.
    \newblock Mathematical surveys and monographs. American Mathematical Society, 1988.
    
    \bibitem{LIU-ZHENG}
    Z.~Liu and S.~Zheng.
    \newblock {\em Semigps associated with dissipative systems}, volume~1.
    \newblock Chapman \& Hall/CRC, Boca Raton, 2000.
    
    \bibitem{RIVERA-FE-1992}
    J.~E. {Muñoz Rivera}.
    \newblock Energy decay rates in linear thermoelasticity.
    \newblock {\em Funkcialaj Ekvacioj}, 35:19--30, 1992.
    
    \bibitem{doi:10.1007/978-1-4612-5561-1}
    H.~Pazy.
    \newblock {\em Semigroups of linear operators and applications to partial differential equations}.
    \newblock Springer, New York, 1983.
    
    \bibitem{PEREIRA-MENZALA-CAM-1989}
    D.~C. Pereira and G.~P. Menzala.
    \newblock Exponential decay of solutions to a coupled system of equations of linear thermoelasticity,.
    \newblock {\em Comp. Appl. Math.}, 8:193--204, 1989.
    
    \bibitem{isbn:9783764388140}
    Y.~Qin.
    \newblock {\em Nonlinear Parabolic-Hyperbolic Coupled Systems and Their Attractors}.
    \newblock Operator Theory: Advances and Applications. Birkh{\"a}user Basel, 2008.
    
    \bibitem{doi:10.1017/S0308210500031164}
    R.~Racke.
    \newblock On the time-asymptotic behaviour of solutions in thermoelasticity.
    \newblock {\em Proceedings of the Royal Society of Edinburgh: Section A Mathematics}, 107(3-4):289–298, 1987.
    
    \bibitem{doi:10.1002/mma.1670120308}
    R.~Racke.
    \newblock Blow-up in non-linear three-dimensional thermoelasticity.
    \newblock {\em Mathematical Methods in the Applied Sciences}, 12(3):267--273, 1990.
    
    \bibitem{doi:10.1007/BF00375601}
    R.~Racke and Y.~Shibata.
    \newblock Global smooth solutions and asymptotic stability in one-dimensional nonlinear thermoelasticity.
    \newblock {\em Archive for Rational Mechanics and Analysis}, 116(1):1--34, 1991.
    
    \bibitem{doi:10.2307/43637959}
    R.~Racke, Y.~Shibata, and S.~Zheng.
    \newblock Global solvability and exponential stability in one-dimensional nonlinear thermoelasticity.
    \newblock {\em Quarterly of Applied Mathematics}, 51(4):751--763, 1993.
    
    \bibitem{isbn:9781461206453}
    R.~Temam.
    \newblock {\em Infinite-Dimensional Dynamical Systems in Mechanics and Physics}.
    \newblock Applied Mathematical Sciences. Springer New York, 2013.
    
    \bibitem{ISBN:9781584884521}
    S.~Zheng.
    \newblock {\em Nonlinear Evolution Equations}.
    \newblock Chapman and Hall/CRC, 2004.
    
    \bibitem{isbn:9781498749640}
    S.~Zheng.
    \newblock {\em Nonlinear Parabolic Equations and Hyperbolic-Parabolic Coupled Systems}.
    \newblock Monographs and Surveys in Pure and Applied Mathematics. CRC Press, 2020.
    
    \end{thebibliography}
\end{document}